\documentclass[12pt]{article}

%%%%%%%%%%%%%%%%%%%%%%%%%%%%%%%%%%%%%%%% Packages %%%%%%%%%%%%%%%%%%%%%%%%%%%%%%%%%%%%%%%%

\usepackage[T1]{fontenc}
\usepackage[utf8]{inputenc}
\usepackage[english]{babel}
\usepackage{fullpage}
\setlength{\parindent}{1cm} 

\usepackage{graphicx}
\usepackage{subfig}
\usepackage{float} 
\usepackage{tikz}
\usetikzlibrary{patterns}
\usepackage{hyperref}

\usepackage{amsmath}
\usepackage{amsthm}
\usepackage{amssymb}
\usepackage{mathtools}

%%%%%%%%%%%%%%%%%%%%%%%%%%%%%%%%%%%%%%%% Math notations %%%%%%%%%%%%%%%%%%%%%%%%%%%%%%%%%%%%%%%%

\newcommand{\al}{\alpha}
\newcommand{\Dt}{\Delta}
\newcommand{\dt}{\delta}
\newcommand{\ep}{\varepsilon}
\newcommand{\lb}{\lambda}

\newcommand{\Nent}{\mathbb{N}}
\newcommand\E[1]{\mathbb{E}\left[#1\right]}
\newcommand\pr[1]{\mathbb{P}\left[#1\right]}
\newcommand\prV[2]{\mathbb{P}_{\scriptscriptstyle #1}\left[#2\right]}

\newcommand{\tni}{\xrightarrow[n\to\infty]{}}
\newcommand{\sni}{\underset{n \to \infty}{\sim}}
\newcommand\Indi[1]{\textbf{1}_{\left\{#1\right\}}}

\newcommand{\cLaw}{\xrightarrow[n\to\infty]{d}}
\newcommand{\cPr}{\xrightarrow[n\to\infty]{\mathbb{P}}}
\newcommand\Binlaw[2]{\mbox{Bin}\left(#1,#2 \right)}
\newcommand\Explaw[1]{\mbox{Exponential}\left(#1\right)}
\newcommand\Weib[1]{\mbox{Weibull}\left(#1\right)}
\newcommand\Frec[1]{\mbox{Fr\'echet}\left(#1\right)}

\newcommand\dH[2]{d_{\mathbb{H}}(#1,#2)}
\newcommand{\RHG}{\mathcal{G}_{\al,\nu}(n)}
\newcommand{\RHGP}{\mathcal{G}^{\scriptscriptstyle{\text{Poi}}}_{\al,\nu}(n)}
\newcommand{\calP}{\mathcal{P}}
\newcommand{\ca}{C_{\alpha}}
\newcommand\Xo[1]{X_{(#1)}}
\newcommand{\Dmax}{D^{\mbox{\tiny{max}}}_n}

\newcommand\Br[2]{\mathcal{B}_{#1}(#2)}
\newcommand\munB[2]{\mu_n(\mathcal{B}_{#1}(#2))}
\newcommand{\Cr}{\mathcal{C}}
\newcommand\Brep[2]{\mathcal{B}^{\ep}_{#1}(#2)}
\newcommand\degep{\deg_{\ep}}
\newcommand\Brp[2]{\mathcal{B}'_{#1}(#2)}
\newcommand\degp{\deg'}
\newcommand\Brpp[2]{\mathcal{B}''_{#1}(#2)}
\newcommand\degpp{\deg''}
\newcommand\degtld{\widetilde{\deg}}
\newcommand\lbr[1]{\lb_n(r(\Xo{#1}))}

\DeclareMathOperator\arcosh{arcosh}

%%%%%%%%%%%%%%%%%%%%%%%%%%%%%%%%%%%%%%%% Theorem environements %%%%%%%%%%%%%%%%%%%%%%%%%%%%%%%%%%%%%%%%

\newtheorem{thm}{Theorem}[section]
\newtheorem{prop}[thm]{Proposition}
\newtheorem{lem}[thm]{Lemma}
\newtheorem{rem}[thm]{Remark}

%%%%%%%%%%%%%%%%%%%%%%%%%%%%%%%%%%%%%%%% Information %%%%%%%%%%%%%%%%%%%%%%%%%%%%%%%%%%%%%%%%

\title{Ordering and Convergence of Large Degrees\\ in Random Hyperbolic Graphs}
\author{Lo\"ic Gassmann\footnote{Universit\'e de Fribourg, Switzerland (loic.gassmann@unifr.ch)}
}
\date{\today}

%%%%%%%%%%%%%%%%%%%%%%%%%%%%%%%%%%%%%%%% Main document %%%%%%%%%%%%%%%%%%%%%%%%%%%%%%%%%%%%%%%%

\begin{document}

\maketitle

\begin{abstract}
We describe the asymptotic behaviour of large degrees in random hyperbolic graphs, for all values of the curvature parameter~$\al$. We prove that, with high probability, the node degrees satisfy the following ordering property: the ranking of the nodes by decreasing degree coincides with the ranking of the nodes by increasing distance to the centre, at least up to any constant rank. In the scale-free regime~$\al>1/2$, the rank at which these two rankings cease to coincide is~$n^{1/(1+8\al)+o(1)}$. We also provide a quantitative description of the large degrees by proving the convergence in distribution of the normalised degree process towards a Poisson point process. In particular, this establishes the convergence in distribution of the normalised maximum degree of the graph. A transition occurs at~$\al = 1/2$, which corresponds to the connectivity threshold of the model. For~$\al < 1/2$, the maximum degree is of order~$n - O(n^{\al + 1/2})$, whereas for~$\al \geq 1/2$, the maximum degree is of order~$n^{1/(2\al)}$. In the cases~$\al < 1/2$ and~$\al > 1/2$, the limit distribution of the maximum degree belongs to the class of extreme value distributions (Weibull for~$\al < 1/2$ and Fr\'echet for~$\al > 1/2$). This refines previous estimates on the maximum degree for~$\al > 1/2$ and extends the study of large degrees to the dense regime~$\al \leq 1/2$.
\end{abstract}

\noindent \textbf{MSC2020 subject classifications:} 05C80, 05C07, 60G70.\\
\textbf{Key words:} random hyperbolic graphs; large degrees; extreme values; Poisson convergence

\section{Introduction}
The class of \emph{complex networks} consists of large real-life networks that primarily arise from human interactions, such as social networks and the Internet, as well as from other fields like biology \cite{AlbBar2002}. Networks in this class exhibit four essential features: \emph{high clustering}, the \emph{small-world} property, \emph{sparseness} and a \emph{scale-free degree distribution}~\cite{ChuLu2006}. Krioukov, Papadopoulos, Kitsak, Vahdat and Bogu\~{n}\'a empirically showed that these four properties naturally emerge in graphs constructed on hyperbolic spaces. This led them to introduce the \emph{random hyperbolic graph} (also denoted RHG) as a model for \emph{complex networks}~\cite{KriPapKitVahBog2010}. Bogu\~{n}\'a, Papadopoulos and  Krioukov further illustrated this point by providing an embedding of the Internet graph into a hyperbolic space~\cite{BogPapKri2010}. For modelling purposes, the model can be tuned through a curvature parameter~$\al$ and a parameter~$\nu$ determining the average degree.

It has now been rigorously proven that, in the regime~$\al > 1/2$, the \emph{random hyperbolic graph} exhibits all the properties of \emph{complex networks} listed above. \emph{Sparseness} is proven in~\cite{Pet2014}, the \emph{small-world} property is shown in~\cite{AbdBodFou2017}, the \emph{high clustering} is also fully tested \cite{CanFou2016,FouHooMulSche2021,GugPanPet2012} and the \emph{scale-free degree distribution} of the RHG has been proven in~\cite{GugPanPet2012}. In this regime, the \emph{random hyperbolic graph} is a particular case of the \emph{geometric inhomogeneous random graph} (also called GIRG)~\cite{BriKeuLen2017}. In the GIRG model, nodes are sampled on a torus and inhomogeneity is obtained via power law weights on the nodes, which determine the connection probabilities. It is established in~\cite{BriKeuLen2017,BriKeuLen2024} that the GIRG model possesses the main properties of \emph{complex graphs} listed above, thereby re-proving, in a more conceptual way, that the \emph{random hyperbolic graph} exhibits all the properties of \emph{complex networks}.

A graph is said to have a \emph{scale-free degree distribution} when its degree sequence follows a power law distribution, meaning that for large~$k$, the number of nodes with degree~$k$ behaves like an inverse power of~$k$. In this case, the graph has a large number of hubs (nodes with degrees much larger than the average degree). A \emph{random hyperbolic graph} is typically structured as follows: high-degree nodes are well connected to each other and serve as hubs for nodes with slightly lower degrees. These intermediate-degree nodes, in turn, connect to nodes with even lower degrees, and so on, forming a hierarchical, tree-like structure. In addition to this tree-like organization, there are also connections between nodes with similar angular coordinates. This structure provides an intuitive explanation for the \emph{small-world} phenomenon observed in these graphs, as short paths naturally emerge by travelling from hub to hub.

In the case~$\al \leq 1/2$, the \emph{random hyperbolic graph} is no longer \emph{sparse} nor \emph{scale-free}. However, the model remains highly inhomogeneous in the sense that the degree distribution of a node depends heavily on its position in the hyperbolic space, leading to the presence of a large number of hubs. This regime is referred to as the \emph{dense regime}. The value~$\al = 1/2$ is also the transition point between the connected and the non-connected regime: for~$\al < 1/2$, the graph has a high probability of being connected, with connectivity being entirely ensured by a few large hubs located near the centre. Conversely, for~$\al > 1/2$, the graph has a high probability of being disconnected. In the critical phase~$\al = 1/2$, the probability of connectivity tends to a constant that depends on the parameter~$\nu$. This constant takes the value~$1$ if and only if~$\nu \geq \pi$ (see~\cite{BodFouMul2016}).\medskip

\textbf{Main results}. In this paper, we are interested in the nodes with the largest degrees, which are the most important hubs of the graph. Theorem~\ref{thm:constant_rank_ordering} proves the ordering of these nodes, namely, that for~$k$ fixed, with high probability, the node with the~$k$-th largest degree is the node with the~$k$-th smallest distance to the centre of the underlying space. In the regime~$\al > 1/2$, Theorem~\ref{thm:kn_ordering} even shows that this ordering property holds up to rank~$n^{1/(1+8\al) + o(1)}$ and fails beyond. Finally, Theorem~\ref{thm:conv_distr_deg} states the convergence in distribution of the normalised point process of the degrees towards a Poisson point process. In particular, it establishes the convergence in distribution of the normalised maximum degree of the graph, for all $\alpha > 0$.

Node degrees in RHGs are closely related to the measures of certain regions of the underlying hyperbolic space. The exact expressions of these quantities are seldom tractable, but since we are seeking asymptotic results, we only need to approximate them. The approximations we employ depend on the value of the curvature parameter~$\al$, as the asymptotic position of the closest node to the centre is strongly influenced by this parameter. In the regime~$\al>1/2$, we use the approximations from~\cite[Lemma 3.2]{GugPanPet2012} (see Lemma~\ref{lem:approx_munB_al_large} of this paper and Lemma~\ref{lem:differential_munBr} for a refinement). In the two regimes~$\al<1/2$ and~$\al = 1/2$, we use new approximations (given by Lemmas~\ref{lem:approx_munB_al_small} and~\ref{lem:conv_munB_n}, respectively).\medskip

\textbf{Structure of the paper}. Section~\ref{section:preliminaries} contains a presentation of the random hyperbolic graph model. Our main results are presented in Section~\ref{section:results}. In Section~\ref{section:convergence_radii}, we prove the convergence of the node radii. Section~\ref{section:geo_lemmas} is dedicated to estimations of the measure of the balls involved in the connection rule. The results of these two last sections are used to prove Theorem~\ref{thm:constant_rank_ordering} (in Section~\ref{section:constant_rank_ordering}), Theorem~\ref{thm:conv_distr_deg} (in Section~\ref{section:conv_distr_deg}) and Theorem~\ref{thm:kn_ordering} (in Section~\ref{section:kn_ordering}).

\section{Definition of the model}
\label{section:preliminaries}

\subsection{Hyperbolic geometry}
Before introducing the random hyperbolic graph model, let us review some definitions and notations concerning hyperbolic geometry. We refer to the book of Stillwell~\cite{Sti1992} for a broader introduction to hyperbolic geometry. The Poincar\'e disc, denoted by~$\mathbb{H}$, is the open unit disc of~$\mathbb{C}$ equipped with the Riemannian metric~$\textbf{g}_\mathbb{H}$, defined at~$w \in \mathbb{H}$ by
\begin{align*}
\textbf{g}_\mathbb{H} \coloneqq \frac{4\textbf{g}_{\mathbb{C}}}{(1-|w|^2)^2}, \quad \mbox{where~$\textbf{g}_{\mathbb{C}}$ is the Euclidean metric on~$\mathbb{C}$.}
\end{align*}
We denote by~$d_{\mathbb{H}}$ the distance induced by~$\textbf{g}_{\mathbb{H}}$ on~$\mathbb{H}$. Throughout this paper, we make extensive use of the polar coordinates to describe points in the Poincar\'e disc. The polar coordinates of a points~$w$ in~$\mathbb{H}$ are given by the pair~$(r(w),\theta(w))$, where~$r(w)$ denotes its hyperbolic distance to the origin and~$\theta(w)$ denotes its angle in the complex plane. The quantity~$r(w)$ is also referred to as the radius of~$w$. If~$x$ and~$y$ are two points of~$\mathbb{H}$ with respective polar coordinates~$(r,\theta)$ and~$(s,\beta)$, the hyperbolic distance between~$x$ and~$y$ is given by the celebrated hyperbolic law of cosines:
\begin{align}\label{formula:hyperbolic_cos}
\cosh(\dH{x}{y}) = \cosh(r)\cosh(s) - \sinh(r)\sinh(s)\cos(\theta - \beta).
\end{align}

For visualization purposes, we will use the native representation to draw pictures of random hyperbolic graphs, as done in~\cite{KriPapKitVahBog2010}. This means that instead of representing random hyperbolic graphs directly in~$\mathbb{H}$, we will represent their images under the mapping~$\omega \mapsto r(w)e^{i\theta(w)}$, defined from~$\mathbb{H}$ to~$\mathbb{C}$. This transformation dilates all distances to the origin, ensuring that every point~$(r,\theta)$ is represented with a Euclidean distance to the origin equal to its radial coordinate~$r$.

For a point~$x$ with polar coordinates~$(r,\theta)$ in~$\mathbb{H}$ and a radius~$s > 0$, we denote by~$\Br{x}{s}$ or~$\Br{(r,\theta)}{s}$ the open hyperbolic ball of radius~$s$ centred at~$x$. For~$0 < r_1 < r_2$, we define~$\Cr(r_1,r_2)$ as the annulus with inner radius~$r_1$ and outer radius~$r_2$, i.e.,~$\Cr(r_1,r_2)\coloneqq \Br{0}{r_2} \setminus \Br{0}{r_1}$.

\subsection{The Random Hyperbolic Graph}
\label{subsection:def_RHG}

We now proceed with the formal definition of the random hyperbolic graph~$\RHG$, as defined in~\cite{KriPapKitVahBog2010}. Fix two parameters~$\al > 0$ and~$\nu > 0$ and for~$n \in \Nent^*$, set
\begin{align}
R_n \coloneqq 2\log(n/\nu).
\end{align}
Define a probability measure~$\mu_n$ on~$\mathbb{H}$ such that if a point~$(r,\theta)$ (in polar coordinates) is chosen according to~$\mu_n$, then~$r$ and~$\theta$ are independent,~$\theta$ is uniformly distributed in~$(-\pi,\pi]$ and the probability distribution of~$r$ has a density function on~$\mathbb{R}_+$ given by
\begin{align}\label{def:density}
\rho_n(r) \coloneqq \frac{\al \sinh(\al r)}{(\cosh(\al R_n) - 1)} \Indi{r < R_n}.
\end{align}
Let~$X_1^n,X_2^n,\dots,X_n^n$ be a sequence of~$n$ independent points sampled from the Poincar\'e disc according to the distribution~$\mu_n$ (for brevity, the superscript~$n$ in~$X_i^n$ will often be omitted). We denote by~$\RHG$ the random hyperbolic graph with~$n$ nodes and parameters~$\al$ and~$\nu$. It is defined as the undirected graph with nodes at the points~$X_1^n,X_2^n,\dots,X_n^n$, where an edge exists between two nodes if and only if their hyperbolic distance is at most~$R_n$.

The degree of a node~$X_i$ in the graph~$\RHG$ is defined as the number of its direct neighbours in~$\RHG$ and is denoted by~$\deg(X_i)$. Since we focus on the behaviour of large graphs, the value of~$n$ will always be considered large, while the parameters~$\al$ and~$\nu$ are fixed. We say that an event is realised with high probability if its probability tends to~$1$ as~$n$ goes to infinity.

\begin{figure}
  \centering
  \subfloat{\includegraphics[scale=0.28]{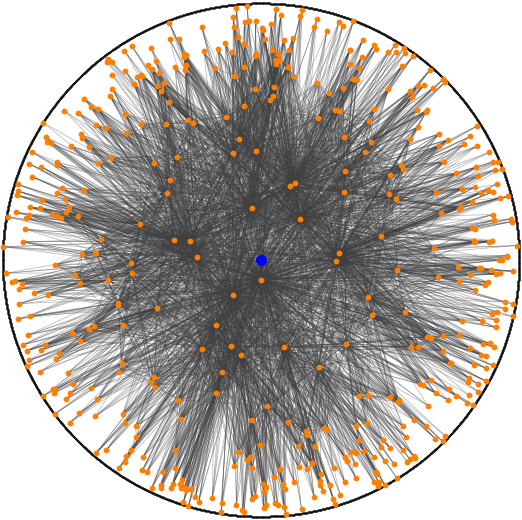}}
  \hspace{12mm}
  \subfloat{\includegraphics[scale=0.28]{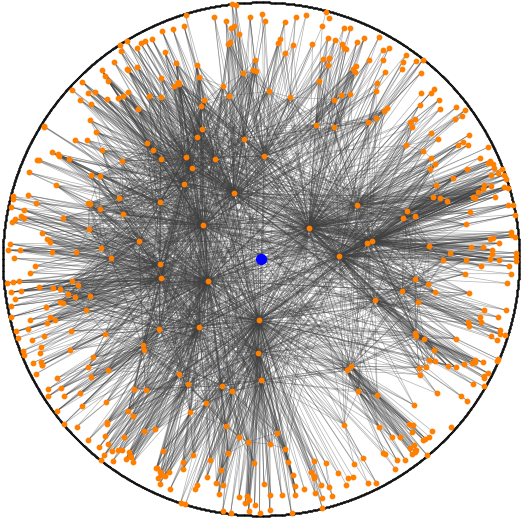}}
  \hspace{12mm}
  \subfloat{\includegraphics[scale=0.28]{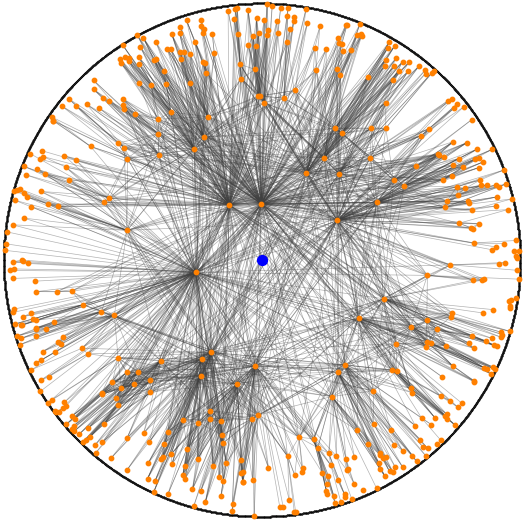}}
  \caption{Simulations of random hyperbolic graphs (native representation) with~$n = 500$, $\nu = 1$,~$\al = 0.45$ (left),~$\al = 0.50$ (middle) and~$\al = 0.55$ (right). The boundary of~$\Br{0}{R_n}$ is represented by a black circle and its centre by a blue dot.}\label{fig:simulations}
\end{figure}

Observe that the~$n$ nodes of the graph~$\RHG$ are located within~$\Br{0}{R_n}$. Moreover, due to the choice of the measure, the points tend to concentrate near the boundary of~$\Br{0}{R_n}$. Also note that the measure~$\munB{(r,\theta)}{s}$ of a ball centred at~$(r,\theta)$ is independent of~$\theta$. Therefore, we omit the angle~$\theta$ to shorten notation and instead write~$\munB{r}{s}$. Likewise, we write~$\mu_n(\Br{r}{s} \setminus \Br{0}{s'})$ instead of~$\mu_n(\Br{(r,\theta)}{s} \setminus \Br{0}{s'})$.

In the special case~$\al = 1$,~$\mu_n$ is the uniform measure on~$\Br{0}{R_n}$ associated with the Riemannian metric~$\textbf{g}_\mathbb{H}$. In the general case~$\al > 0$,~$\mu_n$ corresponds to a uniform measure on the hyperbolic plane~$\mathbb{H}_\al$ of curvature~$-\al^2$. More precisely, for fixed~$\al > 0$, multiplying the differential form in the Poincar\'e disc model by a factor~$1/\al^2$ we obtain a hyperbolic plane of curvature~$-\al^2$. Choosing a point according to the measure~$\mu_n$ amounts to choosing a point uniformly in the ball of radius~$R_n$ of~$\mathbb{H}_\al$ and projecting it on~$\mathbb{H}$, by keeping the same polar coordinates. The measure of a ball of radius~$r$ in~$\mathbb{H}_\al$ is 
\begin{align*}
\frac{2\pi}{\al^2}(\cosh(\al r) - 1),
\end{align*}
thus, the larger~$\al$ is, the faster it increases with~$r$. Therefore, the larger~$\al$ is, the more the points of the graph~$\RHG$ concentrate near the boundary of~$\Br{0}{R_n}$. So, the maximum degree is expected to decrease with~$\al$ (see Figure~\ref{fig:simulations}). The degree distribution is also influenced by the parameter~$\nu$. Increasing~$\nu$ makes the domain smaller, which limits the concentration of the nodes near the boundary of the domain, resulting in a higher expected degree. In the scale-free regime~$\al > 1/2$, the expected degree evolves linearly with~$\nu$~\cite{GugPanPet2012}. 

The Poissonised random hyperbolic graph~$\RHGP$ is obtained by choosing the nodes according to a Poisson point process with intensity measure~$n\mu_n$, instead of choosing~$n$ points according to the measure~$\mu_n$. All the results proved in this paper also hold for the Poissonised model. In the proof of the non-ordering result~\eqref{assert:no_ordering_after_n_beta}, the use of a classical Poissonisation/de-Poissonisation procedure allows to avoid some technicalities by using the properties of Poisson processes.

\section{Results} \label{section:results}

\subsection{Ordering of Large Degrees}

We denote by~$\Xo{1}^n,\Xo{2}^n,\dots,\Xo{n}^n$ a reordering of the nodes of~$\RHG$ by increasing radius, i.e.,~$r(\Xo{1}^n) \leq r(\Xo{2}^n) \leq \dots \leq r(\Xo{n}^n)$. To shorten notation, we will often omit the superscript~$n$. Our first result shows that this ranking of the nodes coincide with the ranking of the nodes by decreasing degree, at least up to any constant rank.

\begin{thm}\label{thm:constant_rank_ordering}
For fixed~$\al > 0$,~$\nu > 0$ and~$k \in \Nent^*$, with high probability,
\begin{align*}
\forall i > k, \; \deg(\Xo{1}^n) > \deg(\Xo{2}^n) > \dots > \deg(\Xo{k}^n) > \deg(\Xo{i}^n)
\end{align*}
\end{thm}

In the regime~$\al > 1/2$, we will even prove the following result, which provides an estimate of the rank at which the ranking of the nodes by increasing radius and the ranking of the nodes by decreasing degree cease to coincide. We believe that similar polynomial rank orderings hold in the other regimes, but we choose to present this refined result only for the case~$\alpha > 1/2$ to avoid additional computations.
\begin{thm}\label{thm:kn_ordering}
Let us fix~$\alpha > 1/2$, $\nu > 0$ and a sequence~$v_n \to \infty$. Define
\begin{align*}
\beta \coloneqq \frac{1}{1+8\al} \quad \mbox{and} \quad k_n &\coloneqq n^{\beta}  \log(n)^{-2\al}.
\end{align*}
We have, with high probability,
\begin{align}\label{assert:ordering_up_to_k_n}
\forall i>k_n, \; \deg(\Xo{1}^n) > \deg(\Xo{2}^n) &> \dots > \deg(\Xo{k_n}^n) > \deg(\Xo{i}^n) 
\end{align}
and there exists~$i \in [n^{\beta},n^{\beta}v_n]$ such that
\begin{align}\label{assert:no_ordering_after_n_beta}
\deg(\Xo{i}^n) < \deg(\Xo{i+1}^n).
\end{align}
\end{thm}
If we choose, for example,~$v_n = \log(n)$, then the result shows that, with high probability, the ordering of the degree holds up to rank~$n^{\beta-o(1)}$ and fails before rank~$n^{\beta + o(1)}$. This proves the optimality of the exponent~$\beta$ as the ordering exponent.

The ordering results above show that the position of a node in the underlying hyperbolic space is a precise estimate of its position in the hierarchy of hubs. This is natural, as the degree of the~$k$ closest nodes to the centre stochastically dominates the degree of the~$n-k$ following nodes. However, this observation alone is insufficient to conclude, since having more connections than these~$n-k$ nodes requires competing with a polynomial number of nodes. For example, in the random recursive tree model, similar ordering results do not hold, as the root has the highest expected degree but does not necessarily have the highest degree (see~\cite{DevLu1995}). In RHGs, we use the fast decay of degree with distance to the centre to show that the~$k$ (or~$k_n$) closest nodes to the centre can compete with the remaining~$n-k$ (or~$n-k_n$) nodes. 

\subsection{Convergence of Large Degrees}

For every subspace~$E$ of the compactified real line~$[-\infty,\infty]$ we denote by~$M_p(E)$ the space of locally finite point measures on~$E$. A point process on~$E$ is a random element of~$M_p(E)$ (see~\cite{Res1987,Bre2020,Kal2017} for details on point processes). We denote by~$\xrightarrow{d}$ the convergence in distribution (or weak convergence) of point processes in~$M_p(E)$. We will also use the notation~$\xrightarrow{d}$ for the convergence in distribution of random variables or random vectors. For~$x \in [-\infty,\infty]$, we write~$\dt_x$ for the Dirac measure at~$x$.

We want to describe the asymptotic behaviour of~$\sum_{i=1}^n \dt_{\deg(X_i^n)}$, which is the point process of node degrees . We consider this point process as an element of~$M_p((0,\infty])$. Since the interval includes~$+\infty$, this point process captures information about the largest degrees of the graph. Given that the expected value of the maximum degree goes to infinity with~$n$, we normalise the degree process by a quantity depending on the expected value of the maximum degree. This quantity heavily depends on the curvature parameter~$\al$. As with connectivity, a transition occurs at~$\al = 1/2$: for~$\al < 1/2$, the maximum degree is of order~$n - O(n^{\al + 1/2})$, whereas for~$\al \geq 1/2$, the maximum degree is of order~$n^{1/(2\al)}$. This compels us to treat the three regimes~$\al<1/2$,~$\al=1/2$ and~$\al > 1/2$ separately. The result below states the convergence in distribution of the normalised point process. We recall that for~$\beta > 0$, a random variable~$Z$ follows the distribution~$\Weib{\beta}$ if for all~$z \geq 0$,~$\mathbb{P}[Z \leq z] = 1 - e^{-z^\beta}$ and it follows the distribution~$\Frec{\beta}$ if for all~$z \geq 0$,~$\mathbb{P}[Z \leq z] =  e^{-z^{-\beta}}$.

\begin{thm}\label{thm:conv_distr_deg} Fix~$\al,\nu > 0$ and denote by~$\Dmax$ the maximum degree of the graph~$\RHG$. We have the following convergences:\\
$\bullet$ For~$\alpha < 1/2$, 
\begin{align*}
\sum\limits_{i=1}^n \dt_{n^{-(\al + 1/2)}(\deg(X_i^n)-n)} \cLaw \eta_{g_1},\quad \mbox{in } M_p((-\infty,0]),
\end{align*}
where~$\eta_{g_1}$ is the Poisson process whose intensity measure has density~$g_1$, given by
\begin{align*}
\forall y \in (-\infty,0],\, g_1(y) = 2 \pi^2 \nu^{2\al} |y|.
\end{align*}
In particular, 
\begin{align*}
\pi\nu^{\al} \frac{n-\Dmax}{n^{\al+1/2}} \cLaw \Weib{2}.
\end{align*}
$\bullet$ For~$\alpha = 1/2$,
\begin{align*}
\sum\limits_{i=1}^n \dt_{n^{-1}\deg(X_i^n)} \cLaw \eta_{g_2},\quad \mbox{in } M_p((0,1]),
\end{align*}
where~$\eta_{g_2}$ is the Poisson process whose intensity measure has density~$g_2$, given by
\begin{align*}
\forall y \in (0,1],\, g_2(y) = \nu |(V_{\scriptscriptstyle 1/2}^{-1})'(y)| \sinh(V_{\scriptscriptstyle 1/2}^{-1}(y)/2),
\end{align*}
where~$V_{\scriptscriptstyle 1/2}$ is defined by
\begin{align*}
\forall r > 0,\; V_{\scriptscriptstyle 1/2}(r) \coloneqq \frac{1}{\pi}\int_0^1 \arccos\left(\max\left(-1,\frac{\cosh(r) - x^{-2}}{\sinh(r)}\right)\right) dx.
\end{align*}
In particular,
\begin{align*}
n^{-1} \Dmax \cLaw V_{\scriptscriptstyle 1/2}(2\arcosh(\Explaw{2\nu}+1)).
\end{align*}
$\bullet$ For~$\alpha > 1/2$,
\begin{align*}
\sum\limits_{i=1}^n \dt_{n^{-1/(2\al)}\deg(X_i^n)} \cLaw \eta_{g_3},\quad \mbox{in } M_p((0,\infty]),
\end{align*}
where~$\eta_{g_3}$ is the Poisson process whose intensity measure has density~$g_3$, given by
\begin{align*}
\forall y \in (0,\infty],\, g_3(y) = 2\al (\ca \nu)^{2\alpha} y^{-2\al-1},
\end{align*}
where~$\ca \coloneqq \frac{2\al}{\pi(\al-1/2)}$. In particular,
\begin{align*}
\frac{\Dmax}{\ca \nu n^{1/(2\al)}} \cLaw \Frec{2\al}.
\end{align*}
\end{thm}

\begin{rem}\label{rem:thm_conv}
Convergence in distribution of a sequence of processes~$\calP_n$ towards a process~$\calP$ in~$M_p((a,b])$ (with~$a,b \in [-\infty,\infty]$) is equivalent to the convergence in distribution of the vectors~$(Y_1^n,Y_2^n,\dots,Y_k^n)$ towards~$(Y_1,Y_2,\dots,Y_k)$ in~$\mathbb{R}^k$ for every positive integer~$k$, where~$Y_1^n \geq Y_2^n \geq \dots \geq Y_k^n$ are the~$k$ largest points of~$\calP_n$ and~$Y_1 \geq Y_2 \geq \dots \geq Y_k$ are the~$k$ largest points of~$\calP$ (assuming that~$\calP$ almost surely has infinitely many points). Thus, the convergence of the maximum degree follows directly from the convergence of the point process of the degrees. One get a similar characterisation of convergence in distribution in~$M_p([a,b))$ by inverting the order. Convergence in distribution in~$M_p(E)$ can be characterised in many other ways, such as via the convergence of evaluation functions or the convergence of Laplace transforms (see~\cite{Res1987,Bre2020,Kal2017}).
\end{rem}

In the regime~$\al > 1/2$, Theorem \ref{thm:conv_distr_deg} refines the estimate of~$n^{1/(2\al)+o(1)}$ for the maximum degree given in~\cite{GugPanPet2012}. It also extends the description of the scale-free degree sequence presented in the same paper by describing precisely the sequence of the~$k$ largest degrees for~$k$ fixed. Also note that in the regime~$\al > 1/2$, a more general result can be found in~\cite{BhaSch2022}, where the convergence towards a Poisson point process with a power law intensity is proven for a general class of scale-free inhomogeneous random graphs. By refining the proof of the representation of the RHG as a GIRG from~\cite{BriKeuLen2017}, one can show that the random hyperbolic graph in the regime~$\al > 1/2$ is indeed part of the more general model considered in~\cite{BhaSch2022}. This provides an alternative proof of Theorem~\ref{thm:conv_distr_deg} for~$\al > 1/2$. Note that this result would not hold directly for GIRGs, even for unspecified limit distributions, because the~$O$-notations appearing in the definition of the edge probabilities may allow large degrees to oscillate instead of converging. In the present paper, we propose proof strategies that work for both the scale-free regime~$\al > 1/2$ and the dense regime~$\al \leq 1/2$.

In the cases~$\al < 1/2$ and~$\al > 1/2$, we obtain extreme value distribution limits for the maximum degree~$\Dmax$. This is not surprising, as~$\Dmax$ is the maximum of a weakly correlated sequence of variables. However, a difficulty in proving a formal result using a standard extreme value theorem (see for example~\cite{LeaRoo1988,Res1987}) arises from the fact that the distribution of the typical degree depends on~$n$. Nevertheless, we can still use extreme value theory to give an intuitive explanation of the limit distributions of~$\Dmax$ as follows: In the regime~$\al > 1/2$, the limit distribution is easy to interpret since the degree of a typical node converges to a power law with exponent~$2\al +1$, which belongs to the domain of attraction of~$\Frec{2\al}$. In the regime~$\al < 1/2$, the degree distribution does not converge, but since the largest degrees are attained by nodes that are close to the centre, we can focus on a variable~$D_n$ distributed as the degree of a typical node in the annulus~$\Cr(0,n^{\al-1/2 + \ep})$, with~$\ep > 0$ small. A direct computation using Lemma~\ref{lem:approx_munB_al_small} shows that~$(n-D_n)n^{-(1/2+\al+\ep)}$ converges in distribution towards the inverse of a power law with exponent~$1$, which belongs to the domain of attraction of~$\Weib{2}$. Similar arguments using polynomial normalisations cannot hold in the regime~$\al = 1/2$ as, in this regime, the highest degrees are too much influenced by the geometry of the model. The influence of the geometry can be seen in the function~$V_{\scriptscriptstyle 1/2}(2\arcosh(\cdot+1))$ appearing in the limit and explains why we do not obtain an extreme value distribution in this case.

In a soft version of the random hyperbolic graph model~\cite{BogPapKri2010}, the threshold rule for connection is replaced by the following connection rule: conditionally on their positions, the two nodes~$X_i$ and~$X_j$ are connected with probability
\begin{align*}
p_{ij} = \frac{1}{1 + \exp((d_{\mathbb{H}}(X_i,X_j) - R_n)/(2T))},
\end{align*}
where~$T>0$ is a temperature parameter. We recover our initial model by taking the limit~$T \to 0$ in the soft model. We believe that the results of Theorems~\ref{thm:constant_rank_ordering} and~\ref{thm:conv_distr_deg} remain valid in the soft model (with different limit distributions in Theorem \ref{thm:conv_distr_deg}), as the expected degrees of the nodes should stay of the same order in the soft model. In the case~$\al > 1/2$, this can be proven by combining~\cite{BhaSch2022} and~\cite{BriKeuLen2017}. The computation of the limit measure in the soft case would require replacing volume estimates with estimates of the integrals of the connection probability, in order to approximate the expected degrees of the nodes under the new connection rule. \medskip

\textbf{Landau's notations:} In this paper, we use the Landau notations~$o$ and~$O$ to describe the asymptotic behaviour of certain quantities as the number of nodes~$n$ tends to infinity. Specifically, we use a version of these notations that allows us to express uniformity in some other variables~$y_1,y_2,...,y_d$. More precisely, for~$(J_n)_{n \in \Nent}$ a sequence of subsets of~$\mathbb{R}^d$ and two functions~$f,g:\Nent \times \mathbb{R}^d \to \mathbb{R}$, we write
\begin{align*}
\quad g(n,y) = o_{y \in J_n}(f(n,y)) \quad \mbox{if} \quad \forall \ep > 0, \exists n_0, \forall n \geq n_0, \forall y \in J_n,\; |g(n,y)| \leq \ep |f(n,y)|,
\end{align*}
and we write
\begin{align*}
\quad g(n,y) = O_{y \in J_n}(f(n,y)) \quad \mbox{if} \quad \exists c > 0, \exists n_0, \forall n \geq n_0, \forall y \in J_n,\; |g(n,y)| \leq c |f(n,y)|.
\end{align*}
Note that the subscript "$y \in J_n$" indicates that the comparison holds uniformly for~$y$ in the set~$J_n$ as~$n$ tends to infinity. To simplify notation, this subscript is specified beforehand and omitted in most cases. When~$f$ and~$g$ are functions of the variable~$n$ only, we keep the same definitions.

\section{Convergence of the Radii \label{section:convergence_radii}}
The particularity of random hyperbolic graphs, compared to graphs constructed in Euclidean spaces, is that the degree of a node strongly depends on its position in the underlying space, specifically on its radius. In this section, we study the asymptotic behaviour of the radii of the closest points to the centre by proving the convergence of the point process of the radii~$\sum_{i=1}^n \dt_{r(X_i^n)}$ in~$M_p([0,\infty))$. Since the interval includes~$0$, this point process captures information about the smallest degrees of the graph (see Remark~\ref{rem:thm_conv}). As in Theorem~\ref{thm:constant_rank_ordering}, normalisation by the expected value of the smallest radius is required.
\begin{prop}\label{prop:convergence_of_curveR}
We have the following convergences:\\
$\bullet$ For~$\alpha < 1/2$, 
\begin{align*}
\sum\limits_{i=1}^n \dt_{n^{1/2-\al}r(X_i^n)} \cLaw \eta_{\gamma_1}, \quad \mbox{in } M_p([0,\infty)),
\end{align*}
where~$\eta_{\gamma_1}$ is the Poisson process whose intensity measure has density~$\gamma_1$, given by
\begin{align*}
\forall  u \in [0,\infty) ,&\; \gamma_1(u) = 2 \al^2 \nu^{2\al} u.
\end{align*}
$\bullet$ For~$\alpha = 1/2$,
\begin{align*}
\sum\limits_{i=1}^n \dt_{r(X_i^n)} \cLaw \eta_{\gamma_2} ,\quad \mbox{in } M_p([0,\infty)),
\end{align*}
where~$\eta_{\gamma_2}$ is the Poisson process whose intensity measure has density~$\gamma_2$, given by
\begin{align*}
\forall u \in [0, \infty),&\; \gamma_2(u) = \nu \sinh(u/2).
\end{align*}
$\bullet$ For~$\alpha > 1/2$,
\begin{align*}
\sum\limits_{i=1}^n \dt_{r(X_i^n) - (1-1/(2\al))R_n} \cLaw \eta_{\gamma_3},\quad \mbox{in } M_p([-\infty,\infty)),
\end{align*}
where~$\eta_{\gamma_3}$ is the Poisson process whose intensity measure has density~$\gamma_3$, given by
\begin{align*}
\forall u \in [-\infty, \infty) ,&\; \gamma_3(u) = \al \nu e^{\al u}.
\end{align*}
\end{prop}

\begin{proof}
We introduce the function~$\gamma_1$,~$\gamma_2$ and~$\gamma_3$ exactly like in the statement of the Proposition. For~$E$ a subspace of~$[-\infty,\infty]$, let us write~$C_K^+(E)$ for the set of continuous, real-valued and non-negative functions on~$E$ with compact support. We begin with the case~$\al < 1/2$. Fix~$\varphi \in C_K^+([0,\infty))$. By a simple change of variables, we get
\begin{align*}
n \E{\varphi(n^{1/2-\alpha}r(X_1^n))}
&= n \int_0^\infty \varphi(n^{1/2-\alpha}r) \rho_n(r) dr\\
&= \int_0^\infty \varphi(u) \frac{\al \sinh(\al u n^{\al-1/2})}{\cosh(\al R_n) - 1} n^{1/2+\al} \Indi{u \leq n^{1/2-\al} R_n} du.
\end{align*}
Using that~$\sinh(x) \sim x$ for~$x$ close to~$0$ and approximating~$\cosh(R_n/2)$ by~$n/(2\nu)$, we get by the dominated convergence theorem,
\begin{align}\label{limit:condition_for_lap_conv}
n \E{\varphi(n^{1/2-\alpha}r(X_1^n))} \tni \int_0^\infty \varphi(u) \gamma_1(u) du.
\end{align}
For all~$n \in \Nent^*$, we denote by~$\Psi_n$ the Laplace functional associated with the point process~$\sum_{i=1}^n \dt_{n^{1/2-\al}r(X_i^n)}$ and we denote by~$\Psi$ the Laplace functional associated with~$\eta_{\gamma_1}$. Since the variables~$r(X_1^n),\dots,r(X_n^n)$ are independent, it follows from~\eqref{limit:condition_for_lap_conv} and a classical computation that for all~$f \in C_K^+([0,\infty))$,~$\Psi_n(f) \to \Psi(f)$ (see~\cite[Proposition 3.21]{Res1987} for a detailed computation). This is sufficient to establish the desired convergence in distribution for $\al < 1/2$.

In the case~$\alpha = 1/2$, for~$\varphi \in C_K^+([0,\infty))$, we have
\begin{align*}
n \E{\varphi(r(X_1^n))} 
&= n \int_0^\infty \varphi(r) \frac{\sinh(r/2)}{2(\cosh(R_n/2) - 1)}\Indi{r \leq R_n}dr\\
&\tni  \int_0^\infty \varphi(r) \gamma_2(r) dr,
\end{align*}
where the convergence follows from approximating~$\cosh(R_n/2)$ by~$n/(2\nu)$ and applying the dominated convergence theorem. Thus, we can conclude exactly like in the case~$\alpha < 1/2$.

In the case~$\alpha > 1/2$, a simple change of variables gives, for~$\varphi \in C_K^+([-\infty,\infty))$,
\begin{align}\label{formula:lap_conv_large_al}
n \E{\varphi(r(X_1^n) - (1-\tfrac{1}{2\al})R_n)}
&= n \int_0^\infty  \varphi(r - (1-\tfrac{1}{2\al})R_n) \rho_n(r) dr \notag\\
&=  \int_{-(1-\tfrac{1}{2\al}) R_n}^{R_n / (2\al)}  \varphi(u) \frac{\al n \sinh(\al (u + (1-\tfrac{1}{2\al})R_n))}{\cosh(\al R_n) - 1} du.
\end{align}
Moreover, there exists~$K > 0$ such that for all sufficiently large~$n \in \Nent^*$ and all~$u \in \mathbb{R}$,
\begin{align*}
\varphi(u) \frac{\al n \sinh(\al (u + (1-\tfrac{1}{2\al})R_n))}{\cosh(\al R_n) - 1} \leq K \varphi(u) \exp(\al u).
\end{align*}
The bound above is integrable on~$\mathbb{R}$. Applying the dominated convergence theorem to~\eqref{formula:lap_conv_large_al} and using that~$\cosh(x) \sim e^{x}/2$ and~$\sinh(x) \sim e^{x}/2$ for~$x$ large, we get
\begin{align*}
n \E{\varphi(r(X_1^n) - (1-\tfrac{1}{2\al})R_n)} \tni \int_{-\infty}^{\infty} \varphi(u) \gamma_3(u) du.
\end{align*}
We conclude exactly like in the case~$\alpha < 1/2$.
\end{proof}

\section{Measures of the Connection Balls} \label{section:geo_lemmas}
A node in the random hyperbolic graph~$\RHG$ is connected to all other nodes that are at a distance smaller than~$R_n$ from it. Thus, conditionally on the position of the node~$X$, the degree of~$X$ follows a binomial distribution with~$n-1$ trials and success probability~$\munB{r(X)}{R_n}$. The previous section gives a good understanding of the radii of the closest nodes to the centre~$r(\Xo{1})$. In order to obtain information about the degrees of the closest nodes to the centre, we now study ~$\munB{r}{R_n}$ as a function of~$r$. Unfortunately, this quantity cannot be expressed by a tractable closed-form formula. However, since we are principally concerned with the asymptotic behaviour of~$\RHG$ as $n \to \infty$, it is sufficient to provide approximations of this quantity.

\newcommand{\C}{(0,0) circle (1.5)}
\newcommand{\rect}{(-2,-1) rectangle (2,1.4)}
\begin{figure}
\centering
\begin{tikzpicture}[scale = 1.5]
\draw \C;
\begin{scope}
\clip \rect;
\node[inner sep=0pt,opacity=0.4]  at (0,0) {\includegraphics[scale = 0.3] {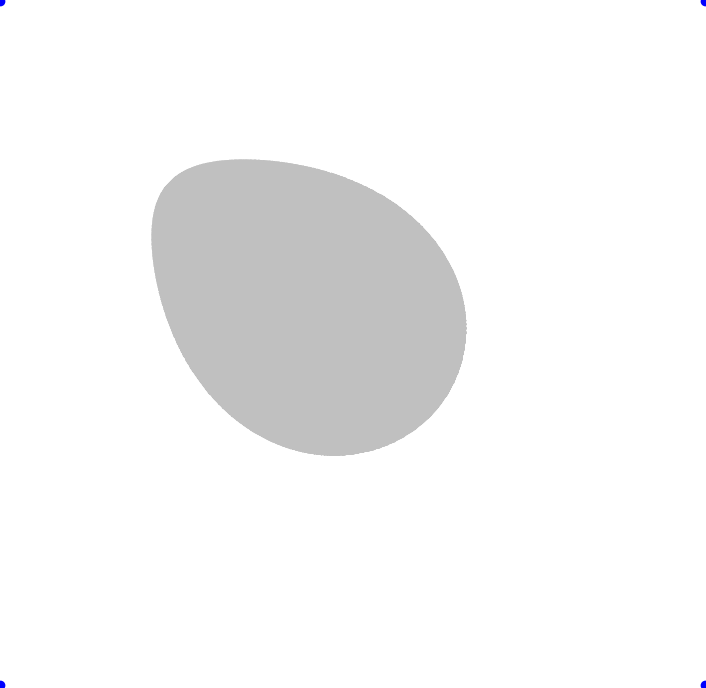}};
\end{scope}
\foreach \theta/\r in {10/1.15,275/0.55,40/0.75,130/1.3,270/1.2,20/1.2,210/0.8,52/1.45,-40/1.35,
-30/1.1,80/1.3,75/1,210/1.2,225/1.1,300/1.3,350/0.7}
{
    \draw (\theta:\r) node[scale = 0.7] {$\bullet$};          
}
\foreach \theta/\r/\alpha/\s in {138/0.41/130/1.3, 138/0.41/210/0.8, 138/0.41/40/0.75, 138/0.41/75/1, 138/0.41/275/0.55, 138/0.41/350/0.7, 210/0.8/210/1.2, 210/0.8/225/1.1, 40/0.75/52/1.45, 40/0.75/75/1, 40/0.75/20/1.2, 75/1/80/1.3, 20/1.2/10/1.15,275/0.55/300/1.3, 275/0.55/270/1.2,-40/1.35/-30/1.1 }
{
    \draw (\theta:\r) -- (\alpha:\s);          
}
\draw (0,0) node[below right,scale=0.7]{$0$} node[scale=0.7] {$\bullet$} ;
\draw(138:0.41) node[scale = 1] {$\bullet$} node [left] {$X$};
\draw  (20:1.5) node[right,scale = 0.7]  {$B_0(R_n)$};
\draw  (150:1.1) node[scale = 0.7] {$\Br{X}{R_n}$};
\end{tikzpicture}
\caption{Depiction of a ball~$\Br{X}{R_n}$ (native representation) \label{fig:image_bin_law}}
\end{figure}

Before dealing with quantitative results, we state the following lemma, which gives an intuitive inclusion between hyperbolic balls. Its proof follows readily from~\cite[Lemma 2.3]{BodFouMul2015}. The decreasing of~$\munB{r}{R_n}$ in~$r$ shows that the expected degree of a node~$X$ is a decreasing function of~$r(X)$. Indeed, one can see in Figure~\ref{fig:image_bin_law} that the ball~$\Br{X}{R_n}$ is not entirely contained in the domain~$\Br{0}{R_n}$. If we let~$r(X)$ increase, then more mass is lost outside the domain~$\Br{0}{R_n}$, leading to a smaller expected degree. This strongly suggests that the nodes with the highest degrees are likely to be located near the origin of the underlying space.
\begin{lem} \label{lem:decreasing_munB} For all~$\al,\nu > 0$,~$n \in \Nent^*$ and~$0\leq r \leq r' \leq R_n$, the following holds
\begin{align*}
\Br{(r',0)}{R_n} \cap \Br{0}{R_n} \; \subset \; \Br{(r,0)}{R_n} \cap \Br{0}{R_n}.
\end{align*}
In particular, the function~$r \mapsto \munB{r}{R_n}$ is decreasing.
\end{lem}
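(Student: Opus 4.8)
The statement to prove is the inclusion
\[
\Br{(r',0)}{R_n} \cap \Br{0}{R_n} \; \subset \; \Br{(r,0)}{R_n} \cap \Br{0}{R_n}
\]
for $0 \leq r \leq r' \leq R_n$, together with the resulting monotonicity of $r \mapsto \munB{r}{R_n}$. Since the excerpt tells us the inclusion follows readily from \cite[Lemma 2.3]{BodFouMul2015}, the plan is to reduce the claim to that lemma and then extract the measure-monotonicity as an immediate corollary. Concretely, I would first recall what \cite[Lemma 2.3]{BodFouMul2015} provides: a statement of the form that the ``visible'' portion of a ball of radius $R_n$ around a point, once intersected with the disk $\Br{0}{R_n}$, shrinks monotonically as the centre is pushed radially outward. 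So the first step is to set up the right picture in polar coordinates and identify the centres $(r,0)$ and $(r',0)$ as two points on the same radial ray with $r \le r'$.

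Next I would carry out the reduction. It suffices to show that for every point $w \in \mathbb{H}$ with $r(w) < R_n$ (so that $w \in \Br{0}{R_n}$) we have the implication $\dH{(r',0)}{w} < R_n \implies \dH{(r,0)}{w} < R_n$. By the hyperbolic law of cosines (\ref{formula:hyperbolic_cos}), writing $w = (s,\beta)$ with $s < R_n$,
\[
\cosh(\dH{(t,0)}{w}) = \cosh(t)\cosh(s) - \sinh(t)\sinh(s)\cos(\beta)
\]
for $t \in \{r,r'\}$. One checks that the right-hand side, as a function of $t$, behaves monotonically in the relevant range: differentiating in $t$ gives $\sinh(t)\cosh(s) - \cosh(t)\sinh(s)\cos(\beta)$, whose sign is governed by whether $\tanh(t) \gtrless \tanh(s)\cos(\beta)$. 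This is not globally monotone, so a naive pointwise argument does not quite close; the cleaner route is exactly the one the authors point to, namely to invoke \cite[Lemma 2.3]{BodFouMul2015} as a black box for the inclusion of the intersected balls, rather than re-deriving it. I would therefore state: ``Fix $w$ with $r(w) \le R_n$. By \cite[Lemma 2.3]{BodFouMul2015}, the set $\Br{(t,0)}{R_n} \cap \Br{0}{R_n}$ is nonincreasing in $t \in [0,R_n]$ in the sense of set inclusion,'' which gives the displayed containment directly with $t = r$ and $t = r'$.

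Finally, the monotonicity of the measure follows by applying $\mu_n$ to both sides of the inclusion: since $\Br{(r',0)}{R_n} \cap \Br{0}{R_n} \subset \Br{(r,0)}{R_n} \cap \Br{0}{R_n}$ and $\mu_n$ is a measure, monotonicity of measures gives $\munB{r'}{R_n} \le \munB{r}{R_n}$ (recalling that $\mu_n$ is supported on $\Br{0}{R_n}$, so $\munB{t}{R_n} = \mu_n(\Br{(t,0)}{R_n} \cap \Br{0}{R_n})$, and that by rotational invariance the angular coordinate of the centre is irrelevant). The only mild subtlety to address is strict versus non-strict decrease: to justify the word ``decreasing'' one notes that for $r < r'$ the inclusion is strict on a set of positive $\mu_n$-measure — e.g. a small ball around the point at radius $r$ and angle $\pi$ on the far side, which lies in the first intersection but not the second once $r'$ is pushed out — so the measure strictly decreases. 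I expect the main (and only real) obstacle to be making sure the cited lemma is invoked at the correct level of generality, i.e. that it indeed yields set inclusion for all $0 \le r \le r' \le R_n$ and not merely a statement about measures; everything after that is a one-line monotonicity argument.
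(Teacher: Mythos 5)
Your proposal takes essentially the same route as the paper: the inclusion is obtained by invoking \cite[Lemma 2.3]{BodFouMul2015} as a black box, and the monotonicity of $r \mapsto \munB{r}{R_n}$ then follows by applying $\mu_n$ to the inclusion (using rotational invariance), which is exactly what the paper does. One small quibble: your witness for \emph{strict} decrease (a point at radius $r$ and angle $\pi$) need not lie in $\Br{(r,0)}{R_n}$ when $r > R_n/2$ --- a point at angle $\pi$ with radius just below $R_n - r$ works instead --- but this is immaterial, since only the weak monotonicity furnished by the inclusion is ever used in the paper.
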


To get quantitative results on~$\munB{r(X)}{R_n}$, we first provide an integral expression of~$\munB{r}{R_n}$, for $r \in [0,R_n)$. In the following definitions, we take~$\al >0$, and fix an integer~$n \in \Nent^*$. We also take~$r$ and~$y$ in~$[0,R_n)$. Using polar coordinates for points in the Poincar\'e disc, we define the set~$I_y(r)$ by
\begin{align*}
I_y(r) \coloneqq \{\theta \in [0,\pi] : (y,\theta) \in \Br{(r,0)}{R_n}\}.
\end{align*}
A direct use of the hyperbolic law of cosines (\ref{formula:hyperbolic_cos}) yields that~$I_y(r)$ is a non-empty interval containing~$0$. So, defining~$\theta_r(y)$ by
\begin{align*}
\theta_r(y) \coloneqq \sup I_y(r),
\end{align*} 
we get
\begin{align}\label{formula:munB}
\munB{r}{R_n} = \frac{1}{\pi}\int_0^{R_n}\theta_r(y) \rho_n(y) dy.
\end{align}
Figure~\ref{fig:theta_y_r} below provides a graphical representation of the angle~$\theta_r(y)$ (in a Euclidean setting to ease the representation). Let us fix~$r \in [0,R_n)$. We want to compute~$\theta_r(y)$ for~$y \in [0,R_n)$. It is clear that~$\Br{0}{R_n-r} \subset \Br{(r,0)}{R_n}$, so
\begin{align}\label{formula:theta_small_y}
\forall y \in [0,R_n - r],\; \theta_r(y) = \pi.
\end{align}
When~$y \geq R_n - r$, the point~$(y,\theta_r(y))$ is at a distance~$R_n$ from the point~$(r,0)$, so by the hyperbolic law of cosines (\ref{formula:hyperbolic_cos}), we get
\begin{align}\label{formula:theta_big_y}
\forall y \in [R_n - r,R_n),\; \cos(\theta_r(y)) = \frac{\cosh(r)\cosh(y) - \cosh(R_n)}{\sinh(r) \sinh(y)}.
\end{align}
One may check that equations (\ref{formula:theta_small_y}) and (\ref{formula:theta_big_y}) can be rewritten as:
\begin{align}\label{formula:theta_gen}
\forall r,y \in [0,R_n),\; \theta_r(y) = \arccos\left(\max\left(-1,\frac{\cosh(r)\cosh(y) - \cosh(R_n)}{\sinh(r) \sinh(y)}\right)\right).
\end{align}

\begin{figure}
\begin{center}
\begin{tikzpicture}[scale = 0.7]
\draw  (0,0) circle (5);
\draw [fill=black!25, opacity=0.6] (2,0) circle (5);

\draw (160:5) node[above left] {$\Br{0}{R_n}$};
\draw (20:5)+(2,0) node[above right] {$\Br{(r,0)}{R_n}$};

\draw [dashed] (0,0) -- (5,0);
\draw [dashed] (2,0) -- (-1/1.41*5 + 2, 1/1.41*5) node[above right,midway] {$R_n$};
\draw [-] (0,0) -- (2,0) node[below] {};
\draw (1,0) node[below] {$r$};
\draw [dashed] (0,0) -- (1/1.41*5 , -1/1.41*5) node[below left ,midway] {$R_n$};

\draw (0,0) node[below,scale=0.8]{$0$} node[scale=0.8] {$\bullet$} ;
\draw (2,0) node {$\times$};
\draw (0.4,0) arc (0:115:0.4);
\draw (60:0.8) node[scale = 0.7] {$\theta_r(y)$};

\draw [-] (0,0) -- (-1/1.41*5 + 2, 1/1.41*5) node[below left,midway] {$y$};
\draw (-1/1.41*5 + 1.5, 1/1.41*5) node[above,scale=0.75] {$(y,\theta_r(y))$};
\draw (-1/1.41*5 + 2, 1/1.41*5) node[scale=0.75] {$\bullet$};
\draw [-] (0,0) -- (-1/1.41*5 + 2, -1/1.41*5) node[below right,midway] {$y$};
\draw (-1/1.41*5 + 1.5, -1/1.41*5) node[below,scale=0.75] {$(y,-\theta_r(y))$};
\draw (-1/1.41*5 + 2, -1/1.41*5) node[scale=0.75] {$\bullet$};
\end{tikzpicture}
\end{center}
\caption{Representation of~$\theta_r(y)$ (in a Euclidean setting) } \label{fig:theta_y_r}
\end{figure}

To estimate the degrees of the closest nodes to the centre~$(\Xo{1},\Xo{2},\dots,\Xo{k})$ (with~$k$ fixed), we need an estimate of~$\munB{r}{R_n}$ that holds for~$r$ in intervals containing the radii of these nodes with high probability. The convergence of the radii given in Proposition~\ref{prop:convergence_of_curveR} shows that the choice of this interval highly depends on the value of $\alpha$: going from small radii when~$\al < 1/2$ to large radii for~$\al > 1/2$. This requires treating the three regimes separately.

In the case~$\al > 1/2$, we recall the following crucial estimate for the angle~$\theta_r(y)$ from~\cite[Lemma 3.1]{GugPanPet2012}.
\begin{lem}\label{lem:estimate_theta_al_large}
Suppose~$\al > 1/2$. For~$r \in [0,R_n)$ and~$y \in [R_n - r,R_n)$,
\begin{align*}
\theta_r(y) = 2e^{(R_n-r-y)/2} (1+O(e^{R_n-r-y})).
\end{align*}
\end{lem}
It is shown in~\cite{GugPanPet2012} how this estimate can be combined with~\eqref{formula:munB} to approximate quantities related to the measure of the ball~$\Br{r}{R_n}$. The first estimate of the following lemma is a weaker but more convenient version of the approximation of~$\munB{r}{R_n}$ given in~\cite[Lemma 3.2]{GugPanPet2012}. The second estimate of the lemma shows that most of the mass of~$\Br{r}{R_n}$ is contained in the part of the ball that is outside of the disc of radius~$(1-\ep)R_n$. Which is in line with the fact that most of the nodes of~$\RHG$ concentrate near the boundary of~$\Br{0}{R_n}$.

\begin{lem}\label{lem:approx_munB_al_large}
Suppose~$\al > 1/2$ and set~$\ca \coloneqq \frac{2\al}{\pi(\al-1/2)}$. Fix~$\eta > 0$ and a sequence~$u_n$ that diverges to~$+\infty$. For~$r \in [u_n,R_n)$ and~$x \in [0,(1-\eta)R_n]$, we have,
\begin{align}\label{approx:munBr}
\mu_n(\Br{r}{R_n}\setminus \Br{0}{x}) &= \ca e^{-r/2}(1+o(1)) 
\end{align}
Fix~$\ep \in (0,1)$ and set~$R_n^{\ep} \coloneqq (1-\ep)R_n$, we have for~$r \in [\ep R_n,R_n)$,
\begin{align}\label{approx:munBr_cut_Rnp}
\mu_n(\Br{r}{R_n} \cap \Br{0}{R_n^{\ep}}) = O(e^{-r/2 - \ep (\al - 1/2)R_n}).
\end{align}
\end{lem}
\begin{proof}
Note that since the measure~$\mu_n$ is supported on the ball~$\munB{0}{R_n}$, it holds that $\mu_n(\Br{r}{R_n} \setminus \Br{0}{x}) = \mu_n((\Br{r}{R_n} \cap \Br{0}{R_n}) \setminus \Br{0}{x})$. The proof of~\eqref{approx:munBr} follows directly by showing that for~$r \in [u_n,R_n)$ and~$x \in [0,(1-\eta)R_n]$, the~$O$-terms and the term~$e^{-(\al-1/2)(R_n-x)}$ appearing in the approximations of~$\mu_n((\Br{r}{R_n} \cap \Br{0}{R_n}) \setminus \Br{0}{x})$ given in~\cite[Lemma 3.2]{GugPanPet2012} are of order~$o(1)$. For the proof of~\eqref{approx:munBr_cut_Rnp}, we begin with the following formula, which is obtained in a similar fashion to~\eqref{formula:munB} and is valid for all~$r \in [0,R_n)$,
\begin{align*}
\mu_n(\Br{r}{R_n} \cap \Br{0}{R_n^{\ep}}) 
&= \frac{1}{\pi}\int_0^{R_n^{\ep}}\theta_{r}(y) \rho_n(y) dy.
\end{align*}
Using~\eqref{formula:theta_small_y}, it follows that, for~$r \in [\ep R_n,R_n)$,
\begin{align*}
\mu_n(\Br{r}{R_n} \cap \Br{0}{R_n^{\ep}}) 
&= \munB{0}{R_n-r} + \frac{1}{\pi}\int_{R_n-r}^{R_n^{\ep}}\theta_{r}(y) \rho_n(y) dy.
\end{align*}
For the first term, we have, for~$r \in [\ep R_n,R_n)$,
\begin{align*}
\munB{0}{R_n-r} = \frac{\cosh(\al (R_n-r)) - 1}{\cosh(\al R_n) - 1}= O(e^{-\al r}) = O(e^{- r/2 - \ep (\al - 1/2)R_n})
\end{align*}
and for the second term, Lemma~\ref{lem:estimate_theta_al_large} yields, for~$r \in [\ep R_n,R_n)$,
\begin{align*}
\frac{1}{\pi}\int_{R_n-r}^{R_n^{\ep}}\theta_{r}(y) \rho_n(y) dy = O\Big(\int_{R_n-r}^{R_n^{\ep}} e^{(R_n-r-y)/2} e^{\al(y-R_n)} dy\Big) = O(e^{- r/2 - \ep (\al - 1/2)R_n}).
\end{align*}
We conclude that~\eqref{approx:munBr_cut_Rnp} holds in the regime~$r \in [\ep R_n,R_n)$.
\end{proof}

In the case~$\al < 1/2$, we give the following approximation of~$\munB{r}{R_n}$. Our estimate holds for~$r$ in intervals~$J_n$ whose bounds tend to~$0$ more slowly than~$n^{-1}$. By the convergence result of Proposition~\ref{prop:convergence_of_curveR}, the radii of the closest nodes to the centre are of order~$n^{\al-1/2}$, so the approximation is valid in an appropriate regime for estimating the expected degree of the closest node to the centre. Since we look at small values of~$r$, it is not surprising that~$\munB{r}{R_n}$ is close to~$1$. Here, we provide additional information on the rate at which this quantity decreases with~$r$. The result is stated for all possible values of~$\al$, as this comes without extra cost.
\begin{lem}\label{lem:approx_munB_al_small}
Fix~$\al > 0$ and suppose that the intervals~$J_n$ are such that~$J_n \coloneqq [a_n,b_n]$, with~$na_n \to \infty$ and~$b_n \to 0$ . Then, for~$r \in J_n$, we have
\begin{align*}
\munB{r}{R_n} = 1 - \frac{\al r}{\pi} + o(r).
\end{align*}
\end{lem}

\begin{proof}
Let~$r \in J_n$. By (\ref{formula:munB}) and (\ref{formula:theta_small_y}) we get
\begin{align} \label{formula:by_exterior}
\munB{r}{R_n} = 1 - \frac{1}{\pi}\int_{R_n-r}^{R_n} (\pi - \theta_r(y)) \rho_n(y) dy.
\end{align}
Now, to find a good approximation of~$\theta_r(y)$, we use (\ref{formula:theta_big_y}). Note that~$b_n \to 0$, so we can approximate all the hyperbolic terms in~$r$ that appear in the identity given by (\ref{formula:theta_big_y}). This yields, for~$r \in J_n$ and~$y \in [R_n - r,R_n)$,
\begin{align}
\cos(\theta_r(y)) &= \frac{(1+O(r^2))(1+e^{-2y}) - (e^{R_n-y} + e^{-R_n-y})}{(r+O(r^3))(1-e^{-2y})}. \label{approxi:cos}
\end{align}
Moreover, it holds that
\begin{align*}
e^{-y} &\leq e^{-R_n+b_n} = O(n^{-2}).
\end{align*}
Since~$n a_n \to +\infty$, it follows that
\begin{align*}
e^{-y} = o(r^2). 
\end{align*}
Reporting this in (\ref{approxi:cos}) yields, for~$r \in J_n$ and~$y \in [R_n - r,R_n)$,
\begin{align}
\cos(\theta_r(y)) &= -\frac{R_n-y}{r} + O(r).\label{formula:approx_cos_theta}
\end{align}
For~$r \in J_n$, we define
\begin{align}
U_n(r) \coloneqq 1 - \frac{1}{\pi}\int_{R_n-r}^{R_n} \Big(\pi - \arccos\Big(-\frac{R_n-y}{r}\Big)\Big) \rho_n(y) dy. \label{formula:def_En}
\end{align}
Let us show that~$U_n(r)$ is a good approximation of~$\munB{r}{R_n}$. Since~$\arccos$ is~$1/2$-Hölder, it follows from~\eqref{formula:approx_cos_theta} that, for~$r \in J_n$ and for~$y \in [R_n - r,R_n)$,
\begin{align*}
\theta_r(y) = \arccos\Big(-\frac{R_n-y}{r}\Big) + O\big(\sqrt{r}\big).
\end{align*}
Combining this estimate of~$\theta_r(y)$ with (\ref{formula:by_exterior}) yields, for~$r \in J_n$,
\begin{align}\label{formula:approx_by_E}
\munB{r}{R_n} &= U_n(r) + O\Big(\sqrt{r} \int_{R_n-r}^{R_n} \rho_n(y) dy \Big) \notag\\
              &= U_n(r) + O\big(r^{3/2} \big).
\end{align}
Now, it remains to estimate~$U_n(r)$. Let~$n \in \Nent^*$ and~$r \in J_n$. Integrating (\ref{formula:def_En}) by parts gives
\begin{align*}
U_n(r) &=1 - \frac{1}{\pi} \int_0^1 \frac{1}{\sqrt{1-z^2}}\frac{\cosh(\al R_n) - \cosh(\al (R_n - rz))}{\cosh(\al R_n)-1} dz.
\end{align*}
Furthermore, the mean value theorem yields, for~$r \in J_n$ and~$z \in [0,1]$,
\begin{align*}
\frac{\cosh(\al R_n) - \cosh(\al (R_n - rz))}{\cosh(\al R_n)-1} = \al r z (1+o(1)),
\end{align*}
so
\begin{align*}
U_n(r) &= 1 - \frac{\al r}{\pi} \int_0^1 \frac{z}{\sqrt{1-z^2}} ( 1 + o_{r \in J_n, z \in [0,1]}(1)) dz \\
       &= 1 - \frac{\al r}{\pi} + o_{r \in J_n}(r).
\end{align*}
Combining this with (\ref{formula:approx_by_E}) yields,
\begin{align*}
\munB{r}{R_n} = 1 - \frac{\al r}{\pi} + o_{r \in J_n}(r).
\end{align*}
\end{proof}

In the case~$\al = 1/2$, the radii of the closest nodes to the centre (without renormalisation) converge in distribution (see Proposition~\ref{prop:convergence_of_curveR}), thus we need to approximate~$\munB{r}{R_n}$ for fixed~$r$. This is the purpose of the following lemma.

\begin{lem}\label{lem:conv_munB_n}
Suppose~$\al > 0$ is fixed. Then,
\begin{align*}
\munB{r}{R_n} \tni V_\al(r) \quad \mbox{uniformly for } r \in [0,\infty),
\end{align*} 
where~$V_\al$ is a decreasing diffeomorphism from~$[0,\infty)$ to~$(0,1]$ defined by:
\begin{align}\label{def:V_al}
\forall r > 0,\;V_\al(r) \coloneqq \frac{1}{\pi}\int_0^1 \arccos\left(\max\left(-1,\frac{\cosh(r) - x^{-1/\al}}{\sinh(r)}\right)\right) dx.
\end{align}
\end{lem}

\begin{proof}
Let us fix~$\al > 0$ and~$r > 0$. For all~$n \in \Nent^*$ and~$x \in (0,1)$, we define~$h_n(x)$ as the only positive real number such that
\begin{align*}
\frac{\cosh(\al h_n(x)) - 1}{\cosh(\al R_n) - 1} = x.
\end{align*}
By the change of variable~$x = \frac{\cosh(\al y) - 1}{\cosh(\al R_n) - 1}$  in~(\ref{formula:munB}), we obtain
\begin{align}\label{formula:munB_renormalised}
\munB{r}{R_n} = \frac{1}{\pi} \int_0^1 \theta_{r}(h_n(x)) dx.
\end{align}
Let us fix~$x \in (0,1)$. By the definition of~$h_n$, we have
\begin{align*}
h_n(x) = R_n + \frac{\log(x)}{\al} + o(1).
\end{align*}
Combining this with the expression of~$\theta_r(y)$ given by (\ref{formula:theta_gen}) yields
\begin{align*}
\theta_{r}(h_n(x)) \tni \arccos\left(\max\left(-1,\frac{\cosh(r) - x^{-1/\al}}{\sinh(r)}\right)\right).
\end{align*}
So, by dominated convergence, the functions~$r \mapsto \munB{r}{R_n}$ converge pointwise towards the function~$V_\al$ defined by~\eqref{def:V_al}. The fact that~$V_\al$ is a decreasing diffeomorphism from~$[0,+\infty)$ to~$(0,1]$ follows directly from its expression. Combining this with the fact that the functions~$r \mapsto \munB{r}{R_n}$ are decreasing and take values in~$(0,1]$, we conclude that the convergence is necessarily uniform on~$[0,\infty)$.
\end{proof}

We conclude this section by providing estimates for the volume of balls centred at the origin. These volumes are much easier to obtain than those of the balls studied earlier, as integrating the density~$\rho_n$ directly gives, for all~$r \in (0,R_n)$,
\begin{align*}
\munB{0}{r} = \frac{\cosh(\al r) - 1}{\cosh(\al R_n) - 1}.
\end{align*}
Approximating the hyperbolic terms, we get the following estimates.
\begin{lem}\label{lem:estimates_munB_0}
Let us fix~$\al > 0$ and a sequence~$u_n \to 0$. We have for~$r \in [0,u_n]$,
\begin{align}
\munB{0}{r} &= (\al r)^2e^{-\al R_n}(1+o(1)). \label{approx0:0,r}
\end{align}
Let us fix~$\al > 0$ and a sequence~$v_n \to \infty$. We have for~$r \in [v_n,R_n)$,
\begin{align}
\munB{0}{r} &= e^{\al(r-R_n)}(1+o(1)). \label{approx0:infty,r}
\end{align}
\end{lem}

\section{Constant rank Ordering (Proof of Theorem~\ref{thm:constant_rank_ordering})} \label{section:constant_rank_ordering}

The results of the two previous sections give enough information on the degrees of the closest nodes to the centre to proceed with the proof of Theorem~\ref{thm:constant_rank_ordering}. Indeed, the convergence of the (normalised) point process of the radii, as stated in Proposition \ref{prop:convergence_of_curveR}, also gives estimates for the radius gap between consecutive nodes, $r(\Xo{i+1}) - r(\Xo{i})$. This, combined with the volume estimates of the previous part allows us to get a lower bound on the difference of the expected degrees of the nodes~$\Xo{i}$ and~$\Xo{j}$ with~$i < j$. The following lemma provides a way to translate this information into a bound on the probability that~$\Xo{i}$ has smaller degree than~$\Xo{j}$. Note that the two variables~$A$ and~$B$ are not required to be independent. Thus~$A$ and~$B$ may be legitimately replaced by two node degrees that we want to compare. In the following, we denote by~$\Binlaw{m}{p}$ the binomial distribution with parameters~$m \in \Nent^*$ and~$p \in [0,1]$.

\begin{lem}\label{lem:wrong_order_chernoff}
Let~$A \sim \Binlaw{n}{a}$ and~$B \sim \Binlaw{n}{b}$. If~$0 < b \leq a$, then
\begin{align*}
\pr{A \leq B} \leq 2\exp\Big(-\tfrac{(a-b)^2n}{8a}\Big).
\end{align*}
\end{lem}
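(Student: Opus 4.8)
The plan is to prove the inequality by a Chernoff-type concentration argument: if $A$ and $B$ are ever to satisfy $A \leq B$, then either $A$ must dip appreciably below its mean $na$, or $B$ must climb appreciably above its mean $nb$, and both events are exponentially unlikely. Concretely, set the threshold to the midpoint $t \coloneqq n\cdot\frac{a+b}{2}$, which lies strictly between $nb$ and $na$ since $b \leq a$ (if $a = b$ the bound is trivial because $2e^{0} = 2 \geq 1 \geq \pr{A \leq B}$, so we may assume $b < a$). Then
\begin{align*}
\pr{A \leq B} \leq \pr{A \leq t} + \pr{B \geq t} = \pr{A \leq n\tfrac{a+b}{2}} + \pr{B \geq n\tfrac{a+b}{2}}\,,
\end{align*}
because on the complement of both events we have $A > t > B$.

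Next I would bound each term with the standard multiplicative Chernoff bounds for the binomial. Writing $\delta \coloneqq \frac{a-b}{2a} \in (0,1)$, the first event is a lower-tail deviation of $A \sim \Binloi{n}{a}$ below $(1-\delta)na$, so the Chernoff bound gives $\pr{A \leq (1-\delta)na} \leq \expo{-\delta^2 na/2}$. For the second term, writing the same threshold as $(1+\delta')nb$ with $\delta' \coloneqq \frac{a-b}{2b}$, the upper-tail Chernoff bound gives $\pr{B \geq (1+\delta')nb} \leq \expo{-\min(\delta',\delta'^2)nb/3}$ (or the cleaner form $\expo{-\delta'^2 nb/(2+\delta')}$). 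The arithmetic then reduces to checking that each exponent dominates $\frac{(a-b)^2 n}{8a}$: for the $A$-term, $\delta^2 na/2 = \frac{(a-b)^2}{8a^2}\cdot na = \frac{(a-b)^2 n}{8a}$ exactly, which is why the midpoint is the right choice; for the $B$-term one verifies $\frac{(a-b)^2}{8a} \leq \frac{\delta'^2 b}{2+\delta'}$ or the analogous bound, using $b \leq a$. Summing the two exponential bounds and crudely bounding the larger exponent by the smaller yields the factor $2$ in front.

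The only mild obstacle is bookkeeping the $B$-term: the upper-tail Chernoff bound has a slightly awkward form, and one must confirm that its exponent is at least $\frac{(a-b)^2 n}{8a}$ uniformly in $0 < b < a$. A safe route is to use the two-sided additive Hoeffding-type bound instead — $\pr{A \leq t} \leq \expo{-2(na - t)^2/n}$ and $\pr{B \geq t} \leq \expo{-2(t - nb)^2/n}$ — with $na - t = t - nb = n\frac{a-b}{2}$, giving each probability at most $\expo{-n(a-b)^2/2}$; since $a \leq 1$ we have $n(a-b)^2/2 \geq n(a-b)^2/(8a)$, and summing gives the claim with room to spare. Whichever concentration inequality one picks, no independence between $A$ and $B$ is used — only a union bound over the two one-sided deviation events — which is exactly the feature needed when $A$ and $B$ are later instantiated as $\deg(Y_n)$ and the degree of another node. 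I would present the Hoeffding version for cleanliness, remarking that the multiplicative bound gives the same conclusion.
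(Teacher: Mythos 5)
Your union bound at the midpoint $t=n\frac{a+b}{2}$ and the multiplicative Chernoff treatment you sketch first are exactly the paper's argument, and that route does close: for the $A$-term the exponent is precisely $\frac{(a-b)^2n}{8a}$, and for the $B$-term, with $\delta'=\frac{a-b}{2b}$, the bound $\pr{B\geq(1+\delta')nb}\leq\expo{-\frac{\delta'^2}{2+\delta'}nb}$ gives the exponent $\frac{(a-b)^2n}{2(a+3b)}\geq\frac{(a-b)^2n}{8a}$ since $a+3b\leq 4a$ when $b\leq a$ — the ``mild obstacle'' you defer is a one-line check, not a gap.

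The version you say you would actually present, however, is wrong. The Hoeffding route gives each tail at most $\expo{-n(a-b)^2/2}$, and you then claim that $a\leq 1$ implies $\frac{n(a-b)^2}{2}\geq\frac{n(a-b)^2}{8a}$. That inequality is equivalent to $a\geq 1/4$, and the hypothesis $a\leq 1$ pushes it in the opposite direction: for $a<1/4$ the stated bound $2\expo{-\frac{(a-b)^2n}{8a}}$ is \emph{smaller} than what Hoeffding yields, so the additive bound does not prove the lemma as stated. This is not a cosmetic issue in this paper: in the application (case $\al>1/2$ of Theorem \ref{thm:max_deg_origin}) one takes $a=a_{n,(r,r')}\approx \ca e^{-r/2}\to 0$, and the whole point of the $1/a$ in the exponent is that relative (multiplicative) deviations of size $\lan$ still produce an exponent of order $ne^{-r/2}\lan^2\geq Kn^{\frac{1}{4\al}}$; an exponent of order $n(a-b)^2\approx ne^{-r}\lan^2$ would not even diverge for $\al>1$. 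So drop the Hoeffding shortcut and carry out the multiplicative computation you sketched — that is the paper's proof and it is correct. (Your observation that no independence of $A$ and $B$ is needed, only the union bound, matches the remark preceding the lemma.)
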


\begin{proof}
By splitting at~$n\frac{a+b}{2}$, we get
\begin{align*}
\pr{A \leq B}
&\leq \pr{B \geq n\tfrac{a+b}{2}} + \pr{A \leq n\tfrac{a+b}{2}}\\
&= \pr{B \geq \left(1+\tfrac{a-b}{2b}\right)nb} + \pr{A \leq \left(1-\tfrac{a-b}{2a}\right)na}.
\end{align*}
For~$Z \sim \Binlaw{n}{p}$ and~$\mu = np$, we recall the multiplicative Chernoff bounds:
\begin{align*}
\forall \dt \geq 0&, \;\pr{Z \geq (1+\dt)\mu} \leq e^{-\frac{\dt^2}{2+\dt}\mu}\\
\forall \dt \in [0,1)&,\; \pr{Z \leq (1-\dt)\mu} \leq e^{-\mu \dt^2/2}.
\end{align*}
Since~$\frac{a-b}{2b} \geq 0$ and~$0 \leq \frac{a-b}{2a} < 1$, we can use these bounds to obtain
\begin{align*}
\pr{A \leq B}
&\leq \exp\Big(-\tfrac{(a-b)^2n}{2(a+3b)}\Big) + \exp\Big(-\frac{(a-b)^2n}{8a}\Big)\\
&\leq  2\exp\Big(-\tfrac{(a-b)^2n}{8a}\Big),
\end{align*}
which proves our claim.
\end{proof}

We are now ready to prove Theorem~\ref{thm:constant_rank_ordering}.

\begin{proof}[Proof of Theorem \ref{thm:constant_rank_ordering}]
Let us fix~$k \in \Nent^*$ and define the ordering event~$O_n$ as: 
\begin{align*}
\forall i > k, \; \deg(\Xo{1}) > \deg(\Xo{2}) > \dots > \deg(\Xo{k}) > \deg(\Xo{i}).
\end{align*}
We want to show that the complementary~$O_n^c$ of this event has probability converging to~$0$. We start with the case~$\al < 1/2$. The case~$\al > 1/2$ can be handled in a similar fashion but we do not present it since the result in this case follows from the stronger result of Theorem~\ref{thm:kn_ordering}. We deal with the case~$\al = 1/2$ at the end of the proof.

Define the localisation event~$L_n$ and the gap event~$G_n$ by
\begin{align*}
L_n &\coloneqq \{t_n \leq r(\Xo{1}) \; \mbox{and} \; r(\Xo{k}) \leq r_n\},\\
G_n &\coloneqq \{\forall i \leq k,\, r(\Xo{i+1}) - r(\Xo{i}) \geq \lb_n\},
\end{align*}
where the sequences~$t_n$,~$r_n$ and~$\lb_n$ are defined for~$n \in \Nent^*$ by
\begin{align*}
t_n \coloneqq n^{\al - 1/2} \log(n)^{-1}, \quad r_n \coloneqq n^{\al - 1/2} \log(n), \quad \lb_n \coloneqq n^{\al - 1/2}\log(n)^{-1} .
\end{align*}
By the convergence result stated in Proposition~\ref{prop:convergence_of_curveR} and the characterisation of convergence in distribution in~$M_p([a,b))$ given in Remark~\ref{rem:thm_conv}, we know that
\begin{align*}
\pr{L_n \cap G_n} \tni 1.
\end{align*}
So, it suffices to show that
\begin{align}\label{conv:suffice_to_show_ordering_al_small}
\pr{O_n^c \cap L_n \cap G_n} \tni 0.
\end{align}
If the event~$O_n^c \cap L_n \cap G_n$ occurs, then there must exist~$i \leq k$ and~$i < j$ such that~$\deg(\Xo{i}) \leq \deg(\Xo{j})$. Since~$L_n$ occurs, we also know that~$t_n \leq r(\Xo{i}) \leq r_n$ and since~$G_n$ occurs, we know that~$r(\Xo{j}) - r(\Xo{i}) \geq \lb_n$. So, the probability to control can be bounded like this (note that the bound does not refer to the reordering of the points)
\begin{align*}
\pr{O_n^c \cap L_n \cap G_n} \leq \pr{\exists i \neq j,\; t_n \leq r(X_i) \leq r_n,\, r(X_j) - r(X_i) \geq \lb_n,\, \deg(X_i) \leq \deg(X_j)}.
\end{align*}
Using a union bound and noting~$\prV{\scriptscriptstyle (X_1,X_2)}{\cdot}$ for the conditional probability with respect to~$(X_1,X_2)$, we finally get
\begin{align*}
\pr{O_n^c \cap L_n \cap G_n} 
\leq n^2\E{ \prV{\scriptscriptstyle(X_1,X_2)}{\deg(X_1) \leq \deg(X_2)} \Indi{t_n \leq r(X_1) \leq r_n, \; r(X_2) - r(X_1) \geq \lb_n} }.
\end{align*}
Conditionally on the couple~$(X_1,X_2)$, the variables~$\deg(X_1)$ and~$\deg(X_2)$ have binomial distributions with~$n-2$ trials and probabilities~$\munB{r(X_1)}{R_n}$ and~$\munB{r(X_2)}{R_n}$, respectively (we neglect the potential connection between~$X_1$ and~$X_2$, since it does not contribute to the inequality between their degrees). Given that~$\munB{r(X_1)}{R_n}$ is larger than~$\munB{r(X_2)}{R_n}$ (see Lemma~\ref{lem:decreasing_munB}), we can apply Lemma~\ref{lem:wrong_order_chernoff} to bound the conditional probability of the previous display. It follows that
\begin{align}
\pr{O_n^c \cap L_n \cap G_n} 
\leq \max\limits_{(s_1,s_2)} 2n^2 \exp\left(- \frac{(n-2)(\munB{s_1}{R_n} - \munB{s_2}{R_n})^2}{8\munB{s_1}{R_n}}\right),\label{ineq:bound_on_Oc_al_small}
\end{align}
where the maximum is taken over the couples~$(s_1,s_2)$ belonging to the set
\begin{align*}
E_n \coloneqq \{(s_1,s_2) \in [0,R_n)^2 \,|\, t_n \leq s_1 \leq r_n,\, s_2 - s_1 \geq \lb_n\}.
\end{align*}

Using Lemma~\ref{lem:approx_munB_al_small} to approximate the volume of the balls~$\munB{s_j}{R_n}$ for~$j=1,2$, we get~$K > 0$ such that, for large~$n$ and~$(s_1,s_2) \in E_n$,
\begin{align*}
\frac{(n-2)(\munB{s_1}{R_n} - \munB{s_2}{R_n})^2}{8\munB{s_1}{R_n}} \geq K n\lb_n^2 = K n^{2\al} \log(n)^{-2}.
\end{align*}
Reporting this in~\eqref{ineq:bound_on_Oc_al_small} yields
\begin{align}
\pr{O_n^c \cap L_n \cap G_n} 
&\leq 2n^2 e^{-K n^{2\al} \log(n)^{-2}} \tni 0,
\end{align}
which proves~\eqref{conv:suffice_to_show_ordering_al_small} and concludes the proof of this case.

Now, let us proceed with the case~$\al = 1/2$. We take~$\ep > 0$ and for all~$\delta,\lb > 0$, we introduce the following set:
\begin{align*}
\mathcal{L}(\lb,\delta) \coloneqq \{r \geq 0 : V_\al(r)- V_\al(r+\lb) > \delta\},
\end{align*}
where~$V_\al$ is the function defined in Lemma~\ref{lem:conv_munB_n}.
In this case, we define the localisation event~$L_n$ and the gap event~$G_n$ by
\begin{align*}
L_n &\coloneqq \{\forall i \leq k,\, r(\Xo{i}) \in \mathcal{L}(\lb,\dt)\}\\
G_n &\coloneqq \{\forall i \leq k,\, r(\Xo{i+1}) - r(\Xo{i}) \geq \lb\},
\end{align*}
where the parameters~$\lb$ and~$\dt$ will be fixed later. Since the function~$V_{\al}$ is strictly decreasing, we find that, for every~$\lb > 0$, the set~$\bigcup_{\delta > 0} \mathcal{L}(\lb,\delta)$ covers the entire interval~$[0,+\infty)$.
Moreover, the convergence result stated in Proposition~\ref{prop:convergence_of_curveR} (without normalisation in the case~$\al = 1/2$) and the characterisation of convergence in distribution in~$M_p([a,b))$ given in Remark~\ref{rem:thm_conv}, imply that the vector~$(r(\Xo{i}),\, i \leq k+1)$ converges in distribution. So, we can fix~$\lb > 0$ and~$\delta > 0$ such that 
\begin{align*}
\pr{L_n \cap G_n} \geq 1-\ep.
\end{align*}
Therefore, it remains to prove that
\begin{align}\label{conv:suffice_to_show_ordering_al_crit}
\pr{O_n^c \cap L_n \cap G_n} \tni 0.
\end{align}
Proceeding exactly as we did for proving~\eqref{ineq:bound_on_Oc_al_small} in the previous case, we get 
\begin{align}
\pr{O_n^c \cap L_n \cap G_n} 
&\leq \max\limits_{(s_1,s_2)} 2n^2 \exp\left(- \frac{(n-2)(\munB{s_1}{R_n} - \munB{s_2}{R_n})^2}{8\munB{s_1}{R_n}}\right),\label{ineq:bound_on_Oc_al_crit}
\end{align}
where the maximum is taken over the couples~$(s_1,s_2)$ belonging to the set 
\begin{align*}
E_n \coloneqq \{(s_1,s_2) \in [0,R_n)^2 \,|\, s_1 \in \mathcal{L}(\lb,\dt),\, s_2 - s_1 \geq \lb\}.
\end{align*}
By the definition of~$\mathcal{L}(\lb,\dt)$ and the uniform convergence of~$\munB{r}{R_n}$ towards~$V_{\al}(r)$ (see Lemma \ref{lem:conv_munB_n}), it follows that, for~$n$ sufficiently large,
\begin{align*}
\forall (s_1,s_2) \in E_n,\; \munB{s_1}{R_n} - \munB{s_2}{R_n} \geq \delta/2.
\end{align*}
Reporting this in~\eqref{ineq:bound_on_Oc_al_crit} and bounding~$\munB{s_1}{R_n}$ by~$1$ proves~\eqref{conv:suffice_to_show_ordering_al_crit} and concludes the proof of this case.

\end{proof}

\section{Convergence of the Degrees (Proof of Theorem~\ref{thm:conv_distr_deg})} \label{section:conv_distr_deg}

The key to proving the convergence of the largest degrees is to show that the degrees of the closest nodes to the centre concentrate around their conditional expected values, given the positions of the corresponding nodes. This implies that the point process of the degrees is comparable to the image of the point process of the radii under the mapping~$r \mapsto \munB{r}{R_n}$. The conclusion then follows from the convergence of the point process of the radii and the continuous mapping theorem.
\begin{proof}[Proof of Theorem~\ref{thm:conv_distr_deg}]
Let us begin with the case~$\al > 1/2$. The convergence of the normalised degree process towards a point process~$\calP$ will be stated if we prove that for all~$k$, the normalised vector of the~$k$ highest degrees converges in distribution to the vector of the~$k$ largest points of~$\calP$ (see Remark~\ref{rem:thm_conv}). By Theorem~\ref{thm:constant_rank_ordering}, with high probability, the vector of the~$k$ highest degrees is the vector of the degrees of the~$k$ closest points to the centre (in the same order). So, to prove the convergence of the normalised degree process towards a point process~$\calP$, it suffices to prove that, for all~$k$,
\begin{align}\label{conv:condition_conv_deg_al_large}
n^{-1/(2\al)}(\deg(\Xo{1}),\deg(\Xo{2}), \dots \deg(\Xo{k})) \cLaw (Y_1,Y_2,\dots,Y_k),
\end{align}
where the variables~$Y_1 \geq \dots \geq Y_k$ are the~$k$ largest points of the point process~$\calP$. For all~$s \in \mathbb{R}$, we set
\begin{align*}
p_n(s) \coloneqq \frac{\mu_n(\Br{s}{R_n} \setminus \Br{0}{s})}{1 - \munB{0}{s}} \Indi{s \in [0,R_n)}
\end{align*}
and we define the vectors~$\Dt_k^n$,~$\widetilde{\Dt_k^n}$ and~$W_k^n$ by
\begin{align*}
\Dt_k^n &\coloneqq (\deg(\Xo{1}),\deg(\Xo{2}) \dots ,\deg(\Xo{k})) \\
\widetilde{\Dt_k^n} &\coloneqq (\degtld(\Xo{1}),\degtld(\Xo{2}) \dots ,\degtld(\Xo{k}))\\ 
W_k^n &\coloneqq ((n-i) p_n(r(\Xo{i})),\, 1 \leq i \leq k),
\end{align*}
where~$\degtld(\Xo{k})$ denotes the number of neighbours of the point~$\Xo{k}$ that are in the annulus~$\Cr(r(\Xo{k}),R_n)$. We first show that~$n^{-1/(2\al)}\Dt_k^n$ can be approximated by~$n^{-1/(2\al)}W_k^n$ and in a second step we will prove the convergence of~$n^{-1/(2\al)}W_k^n$. 

Since the difference~$\Dt_k^n - \widetilde{\Dt_k^n}$ is bounded by~$k$ it is clear that 
\begin{align}\label{conv:equivalence_two_degrees_al_large}
n^{-1/(2\al)}(\Dt_k^n - \widetilde{\Dt_k^n}) \cPr 0.
\end{align}
For all~$i$, conditionally on~$\Xo{i}$, the~$n-i$ nodes with radius larger than~$r(\Xo{i})$ are independently and identically distributed according to the restriction of~$\mu_n$ to the annulus~$\Cr(r(\Xo{i}),R_n)$. So, conditionally on~$\Xo{i}$, the variable~$\degtld(\Xo{i})$ is distributed according to the binomial distribution:
\begin{align}\label{expr:conditional_distr_Xr}
\Binlaw{n-i}{p_n(r(\Xo{i}))}.
\end{align}
Let us fix~$K > 0$ and denote by~$L_n$ the event that all the variables~$r(\Xo{i}),\, i \leq k$ belong to the interval~$[(1-\tfrac{1}{2\al})R_n - K,(1-\tfrac{1}{2\al})R_n + K]$. Let us fix~$\ep > 0$. By the convergence of the radii stated in Proposition~\ref{prop:convergence_of_curveR}, we can choose~$K > 0$ such that for sufficiently large~$n$, the event~$L_n$ occurs with probability greater than~$1 - \ep$. By the volume estimates~\eqref{approx:munBr} and~\eqref{approx0:infty,r}, we know that there exists~$K' > 1$ such that, on the event~$L_n$, for large~$n$ and~$i \leq k$, we have
\begin{align}\label{ineq:bound_on_E_deg}
\tfrac{1}{K'}n^{1/(2\al)} \leq \E{\degtld(\Xo{i}) \,\big|\, \Xo{i}} \leq K'n^{1/(2\al)}.
\end{align}
Let us fix~$\dt > 0$. Denoting by~$\mathbb{P}_{L_n}$ the probability measure conditioned on the event~$L_n$, it follows from~\eqref{expr:conditional_distr_Xr} and \eqref{ineq:bound_on_E_deg} that, for large~$n$ and~$i \leq k$,
\begin{multline*}
\prV{L_n}{\big|\degtld(\Xo{i}) - (n-i)p_n(r(\Xo{i}))\big| \geq n^{1/(2\al)} \delta }\\
\leq \prV{L_n}{\Big|\degtld(\Xo{i}) - \E{\degtld(\Xo{i})\,\big|\,\Xo{i}}\Big| \geq  \tfrac{\dt \E{\degtld(\Xo{i})\,|\,\Xo{i}}}{K'}}.
\end{multline*}
Combining this with a Chernoff bound and using~\eqref{ineq:bound_on_E_deg} again yields
\begin{align*}
&\prV{L_n}{\big|\degtld(\Xo{i}) - (n-i)p_n(r(\Xo{i}))\big| \geq n^{1/(2\al)} \delta} \tni 0.
\end{align*}
From this and~\eqref{conv:equivalence_two_degrees_al_large} we conclude that 
\begin{align}\label{conv:variation_alpha_large}
n^{-1/(2\al)}(\Dt_k^n - W_k^n) \cPr 0.
\end{align}
Now, it remains to prove the convergence in distribution of the vector~$n^{-1/(2\al)} W_k^n$. For this purpose, we remark that
\begin{align}\label{expr:weight_from_radii_alpha_large}
n^{-1/(2\al)} W_k^n = \varphi_n\big(r(\Xo{i}) - (1-\tfrac{1}{2\al})R_n,\, 1 \leq i \leq k\big),
\end{align}
where for all~$n$, the application~$\varphi_n$ is defined from~$\mathbb{R}^k$ to~$\mathbb{R}^k$ by
\begin{align*}
\varphi_n\big(z_i, 1 \leq i \leq k\big) \coloneqq \big(n^{-1/(2\al)}(n-i)p_n(z_i+(1-\tfrac{1}{2\al})R_n),\, 1 \leq i \leq k\big).
\end{align*}
By the volume estimates~\eqref{approx:munBr} and~\eqref{approx0:infty,r}, the applications~$\varphi_n$ converges uniformly on every compact of~$\mathbb{R}^k$ towards the function~$\varphi$ defined by
\begin{align*}
\varphi\big(z_i, 1 \leq i \leq k\big) \coloneqq \big(T(z_i),\, 1 \leq i \leq k\big),
\end{align*}
where the application~$T$ is a diffeomorphism from~$[-\infty,\infty)$ to~$(0,\infty]$ given by
\begin{align*}
T(z) \coloneqq \ca \nu^{1-1/(2\al)} e^{-z/2}.
\end{align*}
Let us denote by~$Z_1 \leq \dots \leq Z_k$ the~$k$ smallest points of the point process~$\eta_{\gamma_3}$ (see Proposition~\ref{prop:convergence_of_curveR} for the definition of~$\gamma_3$). By the convergence of the radii stated in Proposition~\ref{prop:convergence_of_curveR}, the vector~$\big(r(\Xo{i}) - (1-\tfrac{1}{2\al})R_n, 1 \leq i \leq k\big)$ converges in distribution to~$(Z_i, 1 \leq i \leq k)$. Consequently, from~\eqref{expr:weight_from_radii_alpha_large} and the continuous mapping theorem (see for example~\cite[Theorem 5.27]{Kal2021} for a version of the continuous mapping theorem with mappings depending on~$n$), we obtain that 
\begin{align}\label{conv:weight_alpha_large}
n^{-1/(2\al)}W_k^n \cLaw \varphi\big(Z_i,\, 1 \leq i \leq k\big),
\end{align}
Combining~\eqref{conv:variation_alpha_large} and~\eqref{conv:weight_alpha_large} with Slutsky theorem finally gives
\begin{align*}
n^{-1/(2\al)}\Dt_k^n \cLaw \varphi\big(Z_i,\, 1 \leq i \leq k\big).
\end{align*}
This proves that~\eqref{conv:condition_conv_deg_al_large} holds for all~$k$ with the limit measure~$\eta_{\gamma_3} \circ T^{-1}$, so
\begin{align*}
\sum\limits_{i=1}^n \dt_{n^{-1/(2\al)}\deg(X_i^n)} \cLaw \eta_{\gamma_3} \circ T^{-1},\quad \mbox{in } M_p((0,\infty]),
\end{align*}
By classical results on transformations of Poisson processes, the point process~$\eta_{\gamma_3} \circ T^{-1}$ is a Poisson process with an intensity measure whose density~$g_3$ with respect to the Lebesgue measure is given by
\begin{align*}
\forall y \in (0,\infty],\, g_3(y) = \frac{\gamma_3(T^{-1}(y))}{|T'(T^{-1}(y))|} = 2\al (\ca \nu)^{2\al} y^{-2\al-1}.
\end{align*}
This concludes the proof for this case. The proofs for the two other regimes follow the same method. We will omit the parts that are too much similar to the proof of the previous case.

In the case~$\al = 1/2$, we want to prove convergence in distribution of the vector
\begin{align*}
n^{-1}(\deg(\Xo{1}),\deg(\Xo{2}) \dots \deg(\Xo{k})).
\end{align*}
We define the functions~$p_n$ and the vectors~$\Dt_k^n$ and~$W_k^n$ like in the previous case. Using the convergence of the radii stated in Proposition~\ref{prop:convergence_of_curveR}, along with the volume estimate given by~\eqref{approx0:infty,r} and Lemma~\ref{lem:conv_munB_n}, one get that, conditionally on the position of~$\Xo{i}$, with high probability, the expected value of~$\degtld(\Xo{i})$ is comparable to~$n$. So, by copying the proof of~\eqref{conv:variation_alpha_large}, we get in this case
\begin{align*}
n^{-1}(\Dt_k^n - W_k^n) \cPr 0.
\end{align*}
It remains to prove the convergence in distribution of the vector~$n^{-1}W_k^n$. For this purpose, we remark that
\begin{align}\label{expr:weight_from_radii_al_crit}
n^{-1} W_k^n = \varphi_n\big(r(\Xo{i}),\, 1 \leq i \leq k\big),
\end{align}
where the application~$\varphi_n$ is defined from~$\mathbb{R}_+^k$ to~$\mathbb{R}_+^k$ by
\begin{align*}
\varphi_n\big(z_i,\, 1 \leq i \leq k\big) \coloneqq \big(p_n(z_i), 1 \leq i \leq k\big).
\end{align*} 
By the volume estimates given by~\eqref{approx0:infty,r} and Lemma~\ref{lem:conv_munB_n}, the applications~$\varphi_n$ converges uniformly on every compact of~$\mathbb{R}_+^k$ towards the function~$\varphi$ defined by
\begin{align*}
\varphi\big(z_i, 1 \leq i \leq k\big) \coloneqq \big(V_{\scriptscriptstyle 1/2}(z_i),\, 1 \leq i \leq k\big),
\end{align*}
So, combining~\eqref{expr:weight_from_radii_al_crit} with the convergence of the radii to~$\eta_{\gamma_2}$ as we did in the case~$\al > 1/2$, we obtain 
\begin{align*}
\sum\limits_{i=1}^n \dt_{n^{-1}\deg(X_i^n)} \cLaw \eta,\quad \mbox{in } M_p((0,1]),
\end{align*}
where~$\eta$ is a Poisson process with an intensity measure whose density~$g_2$ with respect to the Lebesgue measure is given by
\begin{align*}
\forall y \in (0,1],\, g_2(y) =  \frac{\gamma_2(V_{\scriptscriptstyle 1/2}^{-1}(y))}{|V_{\scriptscriptstyle 1/2}'(V_{\scriptscriptstyle 1/2}^{-1}(y))|} = \nu |(V_{\scriptscriptstyle 1/2}^{-1})'(y)| \sinh(V_{\scriptscriptstyle 1/2}^{-1}(y)/2).
\end{align*}
This concludes the proof of this case.

In the case~$\al < 1/2$, we are interested in proving convergence in distribution of the following vector
\begin{align*}
n^{-(\al + 1/2)}(\deg(\Xo{1})-n,\deg(\Xo{2})-n, \dots \deg(\Xo{k})-n).
\end{align*}
Because of the additional normalisation by~$n$, we shall define the vectors~$\Dt_k^n$ and~$W_k^n$ in a different manner in this case, by letting
\begin{align*}
\Dt_k^n &\coloneqq (\deg(\Xo{1})-n,\deg(\Xo{2})-n \dots \deg(\Xo{k})-n) \\
W_k^n &\coloneqq ((n-i) (p_n\big(r(\Xo{i})\big)-1),\, 1 \leq i \leq k).
\end{align*}
Using the convergence of the radii stated in Proposition~\ref{prop:convergence_of_curveR}, together with the volume estimates given by~\eqref{approx0:0,r} and Lemma~\ref{lem:approx_munB_al_small}, we get that, conditionally on the position of~$\Xo{i}$, with high probability, the expected value of~$\degtld(\Xo{i}) - n$ is comparable to~$-n^{\al + 1/2}$. So, by copying the proof of~\eqref{conv:variation_alpha_large}, we get in this case
\begin{align*}
n^{-(\al + 1/2)}(\Dt_k^n - W_k^n) \cPr 0.
\end{align*}
It remains to prove the convergence in distribution of the vector~$n^{-(\al+1/2)}W_k^n$. For this purpose, we remark that
\begin{align}\label{expr:weight_from_radii_al_small}
n^{-(\al + 1/2)} W_k^n = \varphi_n\big(n^{1/2-\al}r(\Xo{i}),\, 1 \leq i \leq k\big),
\end{align}
where, the application~$\varphi_n$ is defined from~$\mathbb{R}_+^k$ to~$\mathbb{R}_-^k$ by
\begin{align*}
\varphi_n\big(z_i, 1 \leq i \leq k\big) \coloneqq \big(n^{-(\al+1/2)}(n-i) (p_n(n^{\al-1/2} z_i)-1),\, 1 \leq i \leq k\big).
\end{align*} 
By the volume estimates given by~\eqref{approx0:0,r} and Lemma~\ref{lem:approx_munB_al_small}, the applications~$\varphi_n$ converges uniformly on every compact of~$\mathbb{R}_+^k$ towards the function~$\varphi$ defined by
\begin{align*}
\varphi\big(z_i, 1 \leq i \leq k\big) \coloneqq \big(T(z_i),\, 1 \leq i \leq k\big),
\end{align*}
where~$T$ is a diffeomorphism from~$[0,+\infty)$ to~$(-\infty,0]$ given by
\begin{align*}
T(z) \coloneqq - \frac{\al z}{\pi}.
\end{align*}
So, proceeding exactly like in the case~$\al > 1/2$, we obtain 
\begin{align*}
\sum\limits_{i=1}^n \dt_{n^{-(\al + 1/2)}(\deg(X_i^n)-n)} \cLaw \eta,\quad \mbox{in } M_p((-\infty,0]),
\end{align*}
where~$\eta$ is a Poisson process with an intensity measure whose density~$g_1$ with respect to the Lebesgue measure is given by 
\begin{align*}
\forall y \in (-\infty,0],\, g_1(y) =  \frac{\gamma_1(T^{-1}(y))}{|T'(T^{-1}(y))|} = 2 \pi^2 \nu^{2\al} |y|.
\end{align*}
This concludes the proof of this case.

\end{proof}

\section{Ordering/non-Ordering transition (Theorem~\ref{thm:kn_ordering})}\label{section:kn_ordering}

From the proof of Theorem~\ref{thm:constant_rank_ordering}, we observe that the key quantities determining the ordering properties of the node degrees are the differences between the volumes of successive balls,~$\munB{\Xo{i}}{R_n}-\munB{\Xo{i+1}}{R_n}$. We expect the ordering to break around the first~$i$ for which this difference is sufficiently small. The following lemma is a refinement of Lemma~\ref{lem:approx_munB_al_large} that allows to obtain fine estimates on these differences. 

\begin{lem}\label{lem:differential_munBr}
For~$\al > 1/2$, there exists~$r_0 > 0$ such that, for~$r \in (r_0,R_n)$,
\begin{align*}
\tfrac{\partial}{\partial r} \munB{r}{R_n} = - \tfrac{\ca}{2} e^{-r/2} (1 + O(e^{-(\al-1/2)r} + r e^{-r})),
\end{align*}
where~$\ca \coloneqq \frac{2\al}{\pi(\al-1/2)}$ (as in Lemma~\ref{lem:approx_munB_al_large}).
\end{lem}

We explain in Remark~\ref{rem:classical_estimates_not_sufficient} why this estimate is necessary for proving Theorem~\ref{thm:kn_ordering} when~$\al$ is close to~$1/2$. We defer the proof of this lemma to the end of the section and proceed directly with the proof of Theorem~\ref{thm:kn_ordering}.

\begin{proof}[Proof of Theorem \ref{thm:kn_ordering}]
In this proof~$K$ stands for a positive constant (depending only on~$\alpha$ and~$\nu$) whose value may change throughout the proof. We define the sequence~$k_n$ as in the statement of the Theorem. The proof of the two assertions~\eqref{assert:ordering_up_to_k_n} and~\eqref{assert:no_ordering_after_n_beta} are done separately. \medskip

\textbf{Ordering up to rank~$k_n$ (Proof of~\eqref{assert:ordering_up_to_k_n})}\medskip

\noindent~$\bullet$ \textbf{First step: Localisation of the~$k_n$ first nodes}\\
Let~$w_n \coloneqq \log(\log(n))$ and define the localisation event~$L_n$ by
\begin{align*}
L_n \coloneqq \{t_n \leq r(\Xo{1}) \; \mbox{and} \; r(\Xo{k_n}) \leq r_n\},
\end{align*}
where
\begin{align*}
t_n \coloneqq (1-\tfrac{1}{2\alpha})R_n - w_n \quad \mbox{and} \quad r_n \coloneqq t_n +  \tfrac{\beta}{\alpha}\log(n).
\end{align*}

Let us prove that, with this choice of~$t_n$ and~$r_n$, the event~$L_n$ is realised with high probability. We already know from Proposition~\ref{prop:convergence_of_curveR} that~$t_n \leq r(\Xo{1})$ holds with high probability. On the other hand, we note that~$r(\Xo{k_n}) \leq r_n$ holds if and only if the number of nodes in the ball~$\Br{0}{r_n}$ is larger than~$k_n$. The number of points falling in this ball has a binomial distribution with expected value
\begin{align*}
n \munB{0}{r_n} \sni \nu n^{\beta}\log(n)^{-\alpha} \gg k_n.
\end{align*}
Using a Chernoff bound it follows that~$r(\Xo{k_n}) \leq r_n$ holds with high probability, so
\begin{align*}
\pr{L_n} \tni 1.
\end{align*}

\noindent$\bullet$ \textbf{Second step: Proving existence of large gaps between the~$k_n$ first nodes}\\
We define the gap event~$G_n$ by
\begin{align*}
G_n \coloneqq \left\{\forall i \leq k_n,\; r(\Xo{i+1}) - r(\Xo{i}) \geq \lbr{i} \right\},
\end{align*}
where, for all~$n$, the function~$\lb_n$ is defined by
\begin{align*}
\forall s \geq 0,\;\lb_n(s) \coloneqq e^{\alpha(R_n - s)} n^{-(\beta+1)}.
\end{align*}
Let us prove that with this choice of~$\lb_n$, the event~$G_n$ is realised with high probability, which will prove that the radius gaps between the~$n^{\beta}$ first nodes are relatively large. 

The first~$k_n+1$ nodes (i.e.~$\Xo{1},\Xo{2},\dots,\Xo{k_n+1}$) can be sampled in the following way. First, sample~$n$ nodes in $\Br{0}{R_n}$ according to the distribution $\mu_n$, select the closest to the centre as~$\Xo{1}$ and erase the~$n-1$ other points. Next, sample~$n-1$ points according to the restriction of~$\mu_n$ to the annulus~$\Cr(r(\Xo{1}),R_n)$, choose the closest to the centre as~$\Xo{2}$ and erase the other ones. Repeat this process until the $k_n + 1$ first nodes have been sampled (at step~$i+1$ sample~$n-i$ points in the annulus~$\Cr(r(\Xo{i}),R_n)$, choose the closest to the centre as~$\Xo{i+1}$ and erase the other ones). Considering this process, we get
\begin{align*}
\pr{G_n^c,\,L_n} &\leq \sum\limits_{i = 1}^{k_n} \pr{Z_i^n \neq 0,\,L_n},
\end{align*}
where~$Z_i^n$ counts the number of points that fall in the annulus $\Cr\big(r(\Xo{i}),r(\Xo{i}) + \lbr{i}\big)$ at step~$i+1$. Conditionally on the position of~$\Xo{i}$, the variable~$Z_i^n$ follows a binomial distribution with~$n-i$ trials and probability~$p_i^n$ given by 
\begin{align*}
p_i^n &\coloneqq \frac{\mu_n\big(\Cr(r(\Xo{i}),r(\Xo{i})+\lbr{i})\big)}{\mu_n\big(\Cr(r(\Xo{i}),R_n)\big)}.
\end{align*}
From this expression, we conclude that, for~$n$ large enough and~$t_n \leq r(\Xo{i}) \leq r_n$, 
\begin{align*}
p_i^n &\leq K \lbr{i} \exp(\alpha (r(\Xo{i}) - R_n)) = K n^{-(\beta+1)}.
\end{align*}
Writing~$\prV{\scriptscriptstyle \Xo{i}}{\cdot}$ for conditional probability with respect to the variable~$\Xo{i}$, we get,
\begin{align*}
\pr{G_n^c \cap L_n} &\leq \sum\limits_{i = 1}^{k_n} \E{\prV{\scriptscriptstyle \Xo{i}}{Z_i^n \neq 0} \Indi{t_n \leq r(\Xo{i}) \leq r_n}}\\
&\leq k_n (1 - (1-K n^{-(\beta+1)})^{n})\\
&\sni K \log(n)^{-2\al} \tni 0.
\end{align*}
Since~$L_n$ occurs with high probability, we conclude that
\begin{align*}
\pr{G_n} \tni 1.
\end{align*}

\noindent$\bullet$  \textbf{Third step: Comparing the successive expected values of the degrees}\\
Let us denote by~$O_n$ the ordering event described in~\eqref{assert:ordering_up_to_k_n}. By the previous two steps, it remains to prove that 
\begin{align}\label{conv:suffice_to_show_ordering_al_large}
\pr{O_n^c \cap L_n \cap G_n} \tni 0.
\end{align}
Proceeding as we did for proving~\eqref{ineq:bound_on_Oc_al_small} in the proof of Theorem~\ref{thm:constant_rank_ordering}, we get the following upper bound
\begin{align}
\pr{O_n^c \cap L_n \cap G_n} 
&\leq \max\limits_{(s_1,s_2)} 2n^2 \exp\left(- \frac{(n-2)(\munB{s_1}{R_n} - \munB{s_2}{R_n})^2}{8\munB{s_1}{R_n}}\right),\label{ineq:bound_on_Oc_al_large}
\end{align}
where the maximum is taken over the couples~$(s_1,s_2)$ belonging to the set 
\begin{align*}
E_n \coloneqq \{(s_1,s_2) \in [0,R_n)^2 \,|\, t_n \leq s_1 \leq r_n,\, s_2 - s_1 \geq \lb_n(s_1)\}.
\end{align*}
We are now interested in bounding the fraction appearing in the exponential term. Using Lemma~\ref{lem:differential_munBr} to estimate the numerator and~\eqref{approx:munBr} to estimate the denominator, we get~$K > 0$ such that, for~$(s_1,s_2) \in E_n$ and~$n$ large, 
\begin{align}
\frac{(n-2)(\munB{s_1}{R_n} - \munB{s_2}{R_n})^2}{8\munB{s_1}{R_n}}
&\geq K \frac{n(\int_{s_1}^{s_2} e^{-y/2}dy)^2}{e^{-s_1/2}} \notag\\
&\geq K ne^{-s_1/2}(1-e^{-(s_2-s_1)/2})^2 \notag\\
&\geq K ne^{-r_n/2}\lb_n(r_n)^2, \label{ineq:bound_on_arg_exp}
\end{align}
Moreover, straightforward computations give
\begin{align*}
ne^{-r_n/2} = K n^{\tfrac{1-\beta}{2\alpha}} \log(n)^{1/2} \quad \mbox{and} \quad   \lb_n(r_n)= K n^{-2\beta} \log(n)^\al
\end{align*}
By choice of~$\beta$, we have~$\frac{1-\beta}{2\al} = 4\beta$. So, reporting the above in~\eqref{ineq:bound_on_arg_exp} and combining with~\eqref{ineq:bound_on_Oc_al_large} gives
\begin{align*}
\pr{O_n^c \cap L_n \cap G_n} 
&\leq 2n^2\exp(- K \log(n)^{1/2+2\al}) \tni 0,
\end{align*}
which proves~\eqref{conv:suffice_to_show_ordering_al_large} and concludes the proof of \eqref{assert:ordering_up_to_k_n}.

\begin{rem}\label{rem:classical_estimates_not_sufficient}
One could obtain a bound similar to~\eqref{ineq:bound_on_arg_exp} by using the volume estimate from~\cite[Lemma 3.2]{GugPanPet2012} instead of Lemma~\ref{lem:differential_munBr} to bound the numerator. However for~$\al \in (1/2,\frac{7 + \sqrt{33}}{16})$, the errors terms appearing in this estimate are larger than~$\lb_n$ which is not sufficiently precise here. The estimate of the differential of~$\munB{r}{R_n}$ in~$r$ given by Lemma~\ref{lem:differential_munBr} is more suitable for measuring volume differences of balls with close radii.
\end{rem}

\textbf{No ordering beyond rank~$n^\beta$ (Proof of~\eqref{assert:no_ordering_after_n_beta})} \medskip

Fix an arbitrary sequence~$v_n$ diverging to~$+\infty$. Without loss of generality, suppose that~$v_n = O(\log(n))$. This assumption on~$v_n$ will be used implicitly during the proof when proving certain bounds. Let~$w_n' \coloneqq \log(v_n)$ and define
\begin{align*}
t_n' &\coloneqq (1-\tfrac{1}{2\al})R_n + \tfrac{\beta}{\al}\log(n) + w_n' \quad \mbox{and} \quad r_n' \coloneqq t_n' + w_n'.
\end{align*}
The point~$\Xo{i}$ and $\Xo{i+1}$ we are seeking have radii that are approximately located in the interval~$[t_n',r_n']$. So, the counterpart of the non-ordering event~\eqref{assert:no_ordering_after_n_beta} for the Poissonised model~$\RHGP$ can be written like this:\\
\emph{With high probability, there exist two nodes~$v$ and~$v'$ in the graph~$\RHGP$, such that
\begin{align}\label{assert:ordering_for_poisson}
t_n' \leq r(v) \leq r(v') \leq r_n' \quad \mbox{and} \quad \deg(v) + \dt_n \leq \deg(v'),
\end{align}
where the sequence~$\dt_n$ is defined by~$\dt_n \coloneqq \sqrt{ne^{-r_n'/2}} = K n^{2 \beta} e^{-w_n'/2}$.}

In the first three steps of the proof, we will work in the Poissonised model~$\RHGP$ and prove this Poissonised statement. The final step of the proof is a de-Poissonisation procedure that gives the result for~$\RHG$. The extra gap of size~$\dt_n$ that appears in the Poissonised statement is crucial for the de-Poissonisation step. 

Let us fix a small~$\ep \in (0,1)$ and set~$R_n^{\ep} \coloneqq (1-\ep)R_n$. For~$x \in \Br{0}{R_n}$, we denote by~$\Brep{x}{R_n}$ the part of the ball~$\Br{x}{R_n}$ that lies beyond the circle of radius~$R_n^{\ep}$, i.e.,
\begin{align*}
\Brep{x}{R_n} \coloneqq \Br{x}{R_n} \cap \Cr(R_n^{\ep},R_n).
\end{align*}
The~$\ep$-degree of a node~$X$ is the number of nodes (excluding~$X$ itself) that are contained in~$\Brep{X}{R_n}$. It is denoted~$\degep(X)$. Since the nodes of~$\RHGP$ concentrate near the boundary of~$\Br{0}{R_n}$, the~$\ep$-degrees provide good estimates of the actual degrees of the nodes. Therefore, we will first focus on~$\ep$-degree rather than degrees directly, as it is easier to get independence results concerning the~$\ep$-degrees. \medskip

\noindent$\bullet$ \textbf{First step: Finding good pairs of candidates for the couple~$(v,v')$}\\
For all~$n$, set~$\lb_n' \coloneqq n^{-2\beta} e^{- \al w_n'}$ and denote by~$C_n$ the following event (see Figure~\ref{fig:event_Cn}): \\
\emph{There exist two sequences~$(v_i)_{\scriptscriptstyle 1 \leq i \leq c_n}$ and~$(v_i')_{\scriptscriptstyle 1 \leq i \leq c_n}$ each containing~$c_n$ nodes of~$\RHGP$, such that~$v_1,\dots,v_{c_n},v_1',\dots,v_{c_n}'$ are pairwise distinct and, for all~$i$,
\begin{align}\label{cond:small_gap}
t_n' \leq r(v_i) \leq r(v_i') \leq r(v_i)+\lb_n' \leq r_n'.
\end{align}
We also require that, for all~$x$ and~$y$ distinct in~$\{v_1,\dots,v_{c_n},v_1',\dots,v_{c_n}'\}$,
\begin{align}\label{cond:disjoint_ep_balls}
\Brep{x}{R_n} \cap \Brep{y}{R_n}  = \emptyset .
\end{align}}Let us prove that this event is realised with high probability. We will prove in the following two steps of the proof that when this event occurs, there is a high probability of finding two nodes satisfying~\eqref{assert:ordering_for_poisson}. We define~$m_n \coloneqq w_n' n^{2\beta}$ and take~$c_n = o(w_n')$ such that~$c_n \to \infty$. For all~$1 \leq j \leq m_n$, let~$I_j^n \coloneqq [t_n' + j \lb_n', t_n' + (j+1)\lb_n')$. Note that all of these intervals are contained in~$[t_n',r_n']$. We denote by~$A_j^n$ the event that there exist at least two nodes of $\RHGP$ with radial coordinates in the interval~$I_j^n$. The number of nodes having a radial coordinate in the interval~$I_j^n$ follows a Poisson distribution with parameter~$n p_j^n$, with~$p_j^n$ given by
\begin{align*}
p_j^n = \frac{\cosh(\al (t_n' + (j+1) \lb_n')) - \cosh(\al (t_n' + j \lb_n'))}{\cosh(\al R_n) - 1} \geq K \lb_n' e^{\al (t_n' - R_n)} = K n^{-(\beta + 1)}.
\end{align*}
It follows that
\begin{align*}
\pr{A_j^n} \geq Kn^{-2\beta}.
\end{align*}
Moreover, the events~$A_j^n$ are independent, so the number of indices~$1 \leq j \leq m_n$ for which the event~$A_j^n$ occurs dominates a binomial distribution with~$m_n$ trials and an expected value equivalent to~$Kw_n'$. Since,~$c_n = o(w_n')$, it follows that, with high probability, we can find~$c_n$ indices~$j$ for which the event~$A_j^n$ is realised. We conclude that, with high probability, there exist two sequences made of distinct nodes~$(v_i)_{1 \leq i \leq c_n}$ and~$(v_i')_{1 \leq i \leq c_n}$ satisfying condition~\eqref{cond:small_gap} of the event~$G_n$.

\begin{figure}
\centering
\renewcommand{\C}{(0,0) circle (3)}
\newcommand{\CC}{(0,0) circle (2.6)}
\begin{tikzpicture}
\begin{scope}
\clip \C;
\node[inner sep=0pt] at (0,0) {\includegraphics[scale = 0.42] {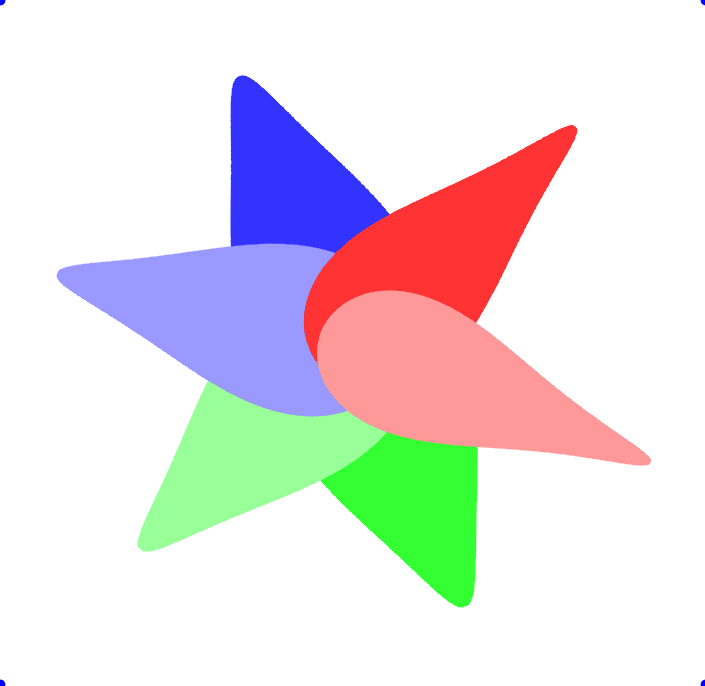}};
\end{scope}
\fill[white,opacity = 0.65] \CC;

\foreach \theta/\r/\label in {-67/1.68/v_1,-136.2/1.8/v_1',113.1/1.77/v_2,166.5/1.83/v_2',44.2/2.02/v_3,-21.7/2.13/v_3'}
{
    \draw (\theta:\r) node[scale = 1] {$\bullet$} node[below left] {$\label$};          
}

\draw \C;
\draw (240:3) node[below left,scale=0.8]  {$B_0(R_n)$};
\draw (0,0) node[scale = 0.8,black] {$\bullet$};
\draw (0,0) node[scale = 0.8, below right,black] {$0$};

\draw[dashed] \CC;
\draw (-90:2.6) node[above,scale=0.8]  {$R_n^{\ep}$};

\end{tikzpicture}
\caption{Depiction of the closeness event~$C_n$. By condition~\eqref{cond:small_gap}, for all~$i$, the radius gap~$r(v_i') - r(v_i)$ is small, ensuring that~$\degep(v_i) \leq \degep(v_i')$ holds with a probability bounded away from~$0$. By condition~\eqref{cond:disjoint_ep_balls}, the portions of the balls~$\Br{v_i}{R_n}$ and~$\Br{v_i'}{R_n}$ that lie beyond the circle of radius~$R_n^{\ep}$ are all disjoints, ensuring the independence of the corresponding~$\ep$-degrees.}\label{fig:event_Cn}
\end{figure}

Let us prove that these nodes also have a high probability of satisfying condition~\eqref{cond:disjoint_ep_balls}. First remark that, by the estimate of~$\theta_r(y)$ (given in Lemma~\ref{lem:estimate_theta_al_large}), if~$\ep$ is small enough, then 
\begin{align*}
\theta_{t_n'}(R_n^{\ep})
= 2\exp\left((R_n-t_n' - R_n^{\ep})/2\right) (1 + O(e^{R_n - t_n' - R_n^{\ep}})) \tni 0.
\end{align*}
So, we may assume that~$c_n$ was chosen such that~$c_n^2 \theta_{t_n'}(R_n^{\ep}) \to 0$. 

Sample all the nodes appearing in~$\Cr(t_n',r_n')$ and suppose that there exist two sequences made of distinct nodes~$(v_i)_{1 \leq i \leq c_n}$ and~$(v_i')_{1 \leq i \leq c_n}$ in~$\Cr(t_n',r_n')$ that satisfy condition~\eqref{cond:small_gap} of the event~$C_n$. Suppose that the angular coordinates of the nodes of these sequences have not been sampled yet. It is valid to suppose that, as condition~\eqref{cond:small_gap} only concerns the radial coordinates of the nodes. We now sample the angular coordinates of these nodes one by one. Each time we sample a new angular coordinate, the probability that it differs by less than~$2\theta_{t_n'}(R_n^{\ep})$ with an already sampled angular coordinate is upper bounded by~$\tfrac{2c_n}{\pi} \theta_{t_n'}(R_n^{\ep})$. So, the probability of getting a pair of angular coordinates that differ by less than~$2\theta_{t_n'}(R_n^{\ep})$ is upper bounded by~$\tfrac{4c_n^2}{\pi}\theta_{t_n'}(R_n^{\ep})$, which tends to~$0$ by our choice of~$c_n$. Thus, with high probability, all the angular coordinates of the nodes~$v_1,\dots,v_{c_n},v_1',\dots,v_{c_n}'$ differ by more than~$2\theta_{t_n'}(R_n^{\ep})$.

For~$1 \leq i \leq c_n$, the angle of the smallest cone (with apex at~$0$) containing~$\Brep{v_i}{R_n}$ is~$2\theta_{r(v_i)}(R_n^{\ep})$. Since~$t_n' \leq r(v_i)$, Lemma~\ref{lem:decreasing_munB} implies that this angle is at most~$2\theta_{t_n'}(R_n^{\ep})$. The same holds for the nodes~$v_i'$. It follows that the sequences~$(v_i)$ and~$(v_i')$ also satisfy condition~\eqref{cond:disjoint_ep_balls}, with high probability. Thus,
\begin{align}\label{conv:high_proba_Cn}
\pr{C_n} \tni 1.
\end{align}

\noindent$\bullet$ \textbf{Second step: Finding a pair of nodes~$(V,V')$ with~$r(V) \leq r(V')$ such that~$\degep(V)$ is small and~$\degep(V')$ is large}\\
Let us sample all the nodes that fall in the annulus~$\Cr(t_n',r_n')$. Since the event~$C_n$ depends only on the point process of the nodes restricted to this region, we can suppose that the points in this annulus are such that the event~$C_n$ occurs. We will denote by~$\mathbb{P}_{\scriptscriptstyle C_n}$ the probability measure conditioned on the event~$C_n$. 

Select two sequences~$(v_i)_{1 \leq i \leq c_n}$ and~$(v_i')_{1 \leq i \leq c_n}$ as provided by the event~$C_n$ (choose them according to a fixed rule to ensure measurability). For all~$1 \leq i \leq c_n$, given the information revealed so far,~$\degep(v_i)$ is a Poisson variable with parameter~$n\mu_n(\Brep{v_i}{R_n})$. For $1 \leq i \leq c_n$, since $r(v_i) \leq r_n'$, it follows from Lemma~\ref{lem:decreasing_munB} that
\begin{align*}
n\mu_n(\Brep{v_i}{R_n}) 
\geq n\mu_n(\Brep{r_n'}{R_n}) = n (\mu_n(\Br{r_n'}{R_n}) - \mu_n(\Br{r_n'}{R_n} \cap \Br{0}{R_n^{\ep}})).
\end{align*}
If~$\ep$ is chosen small enough then~$r_n' \geq \ep R_n$. So, we can use the approximations given by~\eqref{approx:munBr} and~\eqref{approx:munBr_cut_Rnp} to get, uniformly in~$1 \leq i \leq c_n$,
\begin{align*}
n\mu_n(\Brep{v_i}{R_n}) \geq K n e^{-r_n'/2} = K \dt_n^2
\end{align*}
(the sequence~$\dt_n$ is defined below~\eqref{assert:ordering_for_poisson}). The same bound also applies to the points~$v_i'$. By the central limit theorem applied to sums of independent Poisson variables with parameter~$1$, it follows that there exists~$\eta_1 > 0$ such that, for all~$n$ and for all~$1 \leq i \leq c_n$,
\begin{align}\label{ineq:positiv_proba_much_larger}
\prV{\scriptscriptstyle C_n}{\degep(v_i') \geq n\mu_n(\Brep{v_i'}{R_n}) + 2\dt_n} \geq \eta_1.
\end{align}
Similarly, we get~$\eta_2 > 0$ such that, for all~$n$ and~$1 \leq i \leq c_n$,
\begin{align}\label{ineq:positiv_proba_not_large}
\prV{C_n}{\degep(v_i) \leq n\mu_n(\Brep{v_i}{R_n})} \geq \eta_2. 
\end{align}
By condition \eqref{cond:disjoint_ep_balls} of the event~$C_n$ and properties of Poisson point processes, the variables~$\degep(v_1),\dots,\degep(v_{c_n}),\degep(v_i'),\dots,\degep(v_{c_n}')$ are independent. It follows from~$c_n \to \infty$ that, with high probability, we can find~$1 \leq i \leq c_n$ for which both events in~\eqref{ineq:positiv_proba_much_larger} and~\eqref{ineq:positiv_proba_not_large} occur. Define~$i_0$ as the smallest such index and set~$(V,V') = (v_{i_0},v_{i_0}')$. We verify that this defines a random vector~$(V,V')$ on an event of high probability. The definition of $(V,V')$ out of this event is not relevant. \medskip

\noindent$\bullet$ \textbf{Third step: From~$\degep$ to~$\deg$}\\
The previous step would allow us to conclude immediately if we were comparing the~$\ep$-degrees~$\degep$ of the nodes instead of their actual degrees~$\deg$. So, it remains to show that~$\degep$ is very close to~$\deg$. To achieve this, we introduce the following subsets of the ball~$\Br{x}{R_n}$:
\begin{align*}
\Brp{x}{R_n} &\coloneqq \Br{x}{R_n} \setminus (\Cr(t_n',r_n') \cup \Cr(R_n^{\ep},R_n)),\\
\Brpp{x}{R_n} &\coloneqq \Br{x}{R_n} \cap \Cr(t_n',r_n')
\end{align*}
and for a node~$X$, we write~$\degp(X)$ (resp.~$\degpp(X)$) for the number of nodes (excluding~$X$) that are contained in~$\Brp{X}{R_n}$ (resp.~$\Brpp{X}{R_n}$). With these notations, the degree of $X$ can be decomposed as follows:
\begin{align*}
\deg(X) = \degep(X) + \degp(X) + \degpp(X).
\end{align*}
Let us first estimate the error induced by the term~$\degp$. If~$\ep$ is small enough, we can use~\eqref{approx:munBr_cut_Rnp} and get~$\ep' > 0$ such that, for~$n$ large enough,
\begin{align*}
n\mu_n(\Brp{V}{R_n}) \leq  n\mu_n(\Br{V}{R_n} \cap \Br{0}{R_n^{\ep}}) \leq n^{-5\ep'} ne^{-t_n'/2} \leq n^{-4\ep'} \dt_n^2
\end{align*}
(the sequence~$\dt_n$ is defined below~\eqref{assert:ordering_for_poisson}). Since $\Brp{V}{R_n}$ does not intersect the annulus $\Cr(t_n',r_n')$, we have that, conditionally on~$C_n$ and on the position of the node $V$, the variable $\degp(V)$ follows a Poisson distribution with parameter $n\mu_n(\Brp{V}{R_n}) $. Thus, by Chebyshev's inequality,
\begin{align}\label{ineq:degp_V}
\prV{C_n}{\degp(V) \leq n\mu_n(\Brp{V}{R_n}) + n^{-\ep'}\dt_n} \tni 1.
\end{align}
With a similar argument, we also prove that
\begin{align}\label{ineq:degp_Vp}
\prV{C_n}{\degp(V') \geq n\mu_n(\Brp{V'}{R_n}) - n^{-\ep'}\dt_n} \tni 1.
\end{align}
Now, concerning~$\degpp$, a direct computation gives, for $r \in [0,R_n)$,
\begin{align}\label{bound:volume_intermediate_ball}
n\mu_n(\Cr(t_n',r_n')) = o(n^{\beta} \log(n)^{3\al}).
\end{align}
Since $\degpp(V)$ is smaller than the number of nodes that fall in the annulus $\Cr(t_n',r_n')$, it follows from Chebyshev's inequality that
\begin{align}\label{ineq:degpp_V}
\prV{C_n}{\degpp(V) \leq n^\beta \log(n)^{3\al}} \tni 1.
\end{align}
Combining the definition of~$V$ with~\eqref{ineq:degp_V} and~\eqref{ineq:degpp_V}, we get, with high probability,
\begin{align}\label{ineq:low_deg_V}
\deg(V)
&= \degep(V) + \degp(V) + \degpp(V) \notag\\
&\leq n \mu_n(\Brep{V}{R_n}) + n \mu_n(\Brp{V}{R_n}) + n^{-\ep'}\dt_n + n^{\beta}\log(n)^{3 \al} \notag\\
&\leq n \munB{V}{R_n} + o(\dt_n).
\end{align}
Likewise, combining the definition of~$V'$ with~\eqref{ineq:degp_Vp} and~\eqref{bound:volume_intermediate_ball}, we get, with high probability,
\begin{align}\label{ineq:high_deg_Vp}
\deg(V') 
&\geq \degep(V') + \degp(V') \notag\\
&\geq n \mu_n(\Brep{V'}{R_n}) + 2\dt_n + n\mu_n(\Brp{V'}{R_n}) - n^{-\ep'}\dt_n  \notag\\
&\geq n \munB{V'}{R_n}  + 2\dt_n + o(\dt_n).
\end{align}
On the other hand, using that~$r(V') - r(V) \leq \lb_n'$, Lemma~\ref{lem:differential_munBr} yields,
\begin{align*}
n\munB{V}{R_n} - n\munB{V'}{R_n} \leq Kn e^{-t_n'/2} \lb_n' = o(\dt_n).
\end{align*}
Combining this with~\eqref{ineq:low_deg_V} and~\eqref{ineq:high_deg_Vp} proves that, with high probability,
\begin{align}\label{ineq:deg_V_Vp}
\deg(V) + \dt_n \leq \deg(V').
\end{align}
Since~$t_n' \leq r(V) \leq r(V') \leq r_n'$, this concludes the proof of~\eqref{assert:ordering_for_poisson}.\medskip

\noindent$\bullet$ \textbf{Fourth step: de-Poissonisation}\\
Let us now explain how to transfer the result from the Poissonised model~$\RHGP$ to the original model~$\RHG$. We use the standard procedure that consists in coupling the original model with the Poissonised model by sampling the points of~$\RHG$ in the following manner: sample the graph~$\RHGP$ and call~$N_n$ the number of nodes in this graph. If~$N_n > n$, randomly remove~$N_n - n$ nodes from the graph. If~$N_n < n$, add~$n - N_n$ nodes, independently sampled from~$\Br{0}{R_n}$ according to~$\mu_n$ and connect all pairs of nodes that are within distance~$R_n$. The resulting graph follows the same distribution as~$\RHG$ and is referred to as the de-Poissonised graph.

For a node~$x$ that appears in both the Poissonised and de-Poissonised graph, we denote its degree in each graph by~$\deg(x)$ and~$\degtld(x)$, respectively. Fix~$\ep > 0$ small enough. The random variable~$N_n$ follows a Poisson distribution with parameter~$n$. So,
\begin{align*}
\pr{|N_n - n| \geq n^{1/2+\ep}} \tni 0.
\end{align*}
From this, it follows that, with high probability, the nodes~$V$ and~$V'$ are not removed during the de-Poissonisation procedure. For the remainder of the proof, we work on the event that~$V$ and~$V'$ are not removed. In the case where~$N_n < n$, conditionally on the position of~$V$, the variable~$\degtld(V) - \deg(V)$ follows a binomial distribution with~$n - N_n$ trials and probability parameter~$\munB{V}{R_n}$. Thus, with high probability,
\begin{align*}
\degtld(V) - \deg(V) \leq n^{1/2+2\ep} \munB{r_n'}{R_n} \leq K n^{2\beta + 2\ep - 1/2} e^{-w_n'/2} \dt_n = o(\dt_n),
\end{align*}
where the last equality holds if~$\ep$ is chosen sufficiently small (because $\beta < 1/5$). In the case~$N_n > n$, we can also prove with similar arguments that the de-Poissonisation reduces the degree of $V'$ by at most $o(\delta_n)$. Therefore, we finally conclude from~\eqref{ineq:deg_V_Vp} that $\degtld(V) < \degtld(V')$ holds with high probability. In addition, straightforward estimates, using the fact that~$t_n' \leq r(V) \leq r(V') \leq r_n'$, show that in the de-Poissonised graph, the ranks of the nodes~$V$ and~$V'$ in the ranking of the nodes by increasing radii are in the interval~$[n^{\beta},n^{\beta} v_n^{3\al}]$, with high probability. It follows that~\eqref{assert:no_ordering_after_n_beta} holds with high probability (the power~$3\al$ is not problematic, as the sequence~$v_n$ is an arbitrary sequence diverging to~$+\infty$).
\end{proof}

Let us conclude by proving Lemma~\ref{lem:differential_munBr}.

\begin{proof}[Proof of Lemma~\ref{lem:differential_munBr}]
We recall the integral expression~\eqref{formula:munB} of~$\munB{r}{R_n}$:
\begin{align*}
\munB{r}{R_n} = \frac{1}{\pi}\int_0^{R_n}\theta_r(y) \rho_n(y) dy.
\end{align*}
Using the expression of~$\theta_r(y)$ given by~\eqref{formula:theta_gen}, we get
\begin{align*}
\tfrac{\partial}{\partial r} \munB{r}{R_n} = \frac{1}{\pi}\int_{R_n-r}^{R_n} f_{n,r}(y) \rho_n(y) dy,
\end{align*}
where~$f_{n,r}(y) \coloneqq \tfrac{\partial}{\partial r} \arccos(c_{n,r}(y))$, with~$c_{n,r}(y) \coloneqq \frac{\cosh(r)\cosh(y) - \cosh(R_n)}{\sinh(r) \sinh(y)}$. Since the angle~$\theta_r(y)$ is decreasing in~$r$ (see Lemma~\ref{lem:decreasing_munB}), the integral above is always well-defined in~$\mathbb{R} \cup \{-\infty\}$. The contribution of the integral over~$(R_n-r,R_n-r+r_0)$ requires special treatment, because the quantity~$f_{n,r}(y)$ diverges as~$y$ approaches~$R_n-r$. We show at the end of the proof that this contribution is~$O(e^{-\al r})$ (which can be incorporated into the first error term of the formula given in the statement). More precisely, we will prove at the end of the proof that, for~$r \in (r_0,R_n)$,
\begin{align}\label{formula:integral_in_bulk}
\tfrac{\partial}{\partial r} \munB{r}{R_n} = \frac{1}{\pi}\int_{R_n-r+r_0}^{R_n} f_{n,r}(y) \rho_n(y) dy + O(e^{-\al r}).
\end{align}
For now, let us estimate the integrand for~$r \in (r_0,R_n)$ and~$y \in (R_n-r+r_0,R_n)$. A direct computation gives
\begin{align*}
f_{n,r}(y) 
&= -\frac{\cosh(R_n)\cosh(r)-\cosh(y)}{\sinh(r)^2\sinh(y) \sqrt{1-c_{n,r}(y)^2} }.
\end{align*}
Using~$\cosh(x) = e^x(1/2+O(e^{-2x}))$ and~$\sin(x) = e^x(1/2+O(e^{-2x}))$, it follows that
\begin{align}\label{approx:fnr_with_c_gen}
f_{n,r}(y) 
&= -\frac{2e^{R_n-r-y}(1+O(e^{-2r})+O(e^{y-R_n-r}))}{(1+O(e^{-2r} + e^{-2y})) \sqrt{1-c_{n,r}(y)^2} }.
\end{align}
If~$r_0$ is chosen large enough, using~$1/(1+x) = 1+O(x)$ for~$|x| < 1/2$, we get 
\begin{align}\label{approx:fnr_with_c}
f_{n,r}(y) 
&= -\frac{2e^{R_n-r-y}(1+O(e^{y-R_n-r}) + O(e^{-2y}))}{\sqrt{1-c_{n,r}(y)^2} }.
\end{align}
Note that we used that~$y-R_n-r \geq -2r$, to get rid of the~$O(e^{-2r})$ term.
A similar computation gives, for~$r \in (r_0,R_n)$ and~$y \in (R_n-r+r_0,R_n)$,
\begin{align*}
c_{n,r}(y)
&= 1-2e^{R_n-r-y}+O(e^{-2r}+e^{-2y}) .
\end{align*}
So, if~$r_0$ is chosen large enough, we have
\begin{align*}
\frac{1}{\sqrt{1-c_{n,r}(y)^2}} 
&= \frac{e^{-(R_n-r-y)/2}}{2}(1+O(e^{R_n-r-y})).
\end{align*}
Reporting this in (\ref{approx:fnr_with_c}) yields, for~$r \in (r_0,R_n)$ and~$y \in (R_n-r+r_0,R_n)$,
\begin{align*}
f_{n,r}(y) 
&= -e^{(R_n-r-y)/2}(1 + O(e^{R_n-r-y})).
\end{align*}
Estimating the hyperbolic terms in~$\rho_n(y)$, it follows that
\begin{align}\label{approx:fnr}
f_{n,r}(y)\rho_n(y) 
&= - \al e^{(1/2-\al)R_n-r/2}e^{(\al-1/2)y}(1 + O(e^{R_n-r-y}) + O(e^{-\al R_n})).
\end{align}
Solving the integral in~\eqref{formula:integral_in_bulk} using~\eqref{approx:fnr}, without taking into account the error terms, gives
\begin{align*}
-\frac{\al}{\pi} \int_{R_n-r+r_0}^{R_n} e^{(1/2-\al)R_n-r/2}e^{(\al-1/2)y} dy 
&= -\tfrac{\ca}{2} e^{-r/2} (1 + O(e^{-(\al - 1/2)r})).
\end{align*}
If~$\al \neq 3/2$, then the integral over the first error term gives
\begin{align*}
O\Big(\int_{R_n-r+r_0}^{R_n} e^{(1/2-\al)R_n-r/2}e^{(\al-1/2)y} e^{R_n-r-y} dy  \Big) 
&= O(e^{-3r/2}) + O(e^{-\al r}).
\end{align*}
In the case ~$\al = 3/2$, the addition of the error term~$re^{-r}$ in the statement makes it correct. The integral over the second error term (corresponding to the~$O(e^{-\al R_n})$ term of~\eqref{approx:fnr}) is of order~$O(e^{-r/2 - \al R_n})$, which can be incorporated into the first error term of the result. Thus, the computation of the integral appearing in~\eqref{formula:integral_in_bulk} yields the correct estimate. 

It remains to prove~\eqref{formula:integral_in_bulk}. This requires a bound on the speed of divergence of the function~$f_{n,r}(y)$, as~$y$ approaches~$R_n-r$. To obtain this bound, we first need to bound~$c_{n,r}(y)$, for~$y$ in the neighbourhood of~$R_n -r$. In the following,~$K$ stands for a positive constant whose value depends only on~$\al, \nu$ and~$r_0$ and may change throughout the proof. For two functions~$f$ and~$g$ we denote by~$f \wedge g$ (resp.~$f \vee g$) the minimum (resp. maximum) of~$f$ and~$g$. Using the formula~$\cosh(a+b) = \cosh(a)\cosh(b) + \sinh(a)\sinh(b)$ and the mean value theorem to bound the difference~$\cosh(r+y) - \cosh(R_n)$, we obtain, for~$y \in (R_n-r,R_n-r+r_0)$,
\begin{align*}
c_{n,r}(y) 
&= \tfrac{\cosh(r+y)- \sinh(r)\sinh(y) - \cosh(R_n)}{\sinh(r)\sinh(y)}\\
&\geq K(y-(R_n-r))e^{(R_n-r)-y} - 1.
\end{align*}
Since~$(R_n-r) - y \geq -r_0$, we finally get
\begin{align*}
c_{n,r}(y)
&\geq (K(y-(R_n-r)) - 1) \wedge 0.
\end{align*}
On the other hand, for~$r_0$ and~$n$ large enough, approximating the hyperbolic terms by exponentials gives, for~$y \in (R_n-r,R_n-r+r_0)$,
\begin{align*}
c_{n,r}(y) 
&\leq 1 - K.
\end{align*}
Thus, for~$y \in (R_n-r,R_n-r+r_0)$,
\begin{align*}
\frac{1}{\sqrt{1-c_{n,r}(y)^2}} \leq \frac{1}{\sqrt{1-(1-K)^2}} \vee \frac{1}{\sqrt{1-((K(y-(R_n-r)) - 1) \wedge 0)^2}}.
\end{align*}
It follows that the function~$\frac{1}{\sqrt{1-c_{n,r}(y)^2}}$ is integrable over the interval~$(R_n-r,R_n-r+r_0)$ and the integral is bounded by a constant that does not depend on~$n$. From this and the approximation of~$f_{n,r}(y)$ given in (\ref{approx:fnr_with_c_gen}) (which also holds for~$y \in [R_n-r,R_n-r+r_0]$), we get
\begin{align*}
\int_{R_n-r}^{R_n-r+r_0} f_{n,r}(y) \rho_n(y) dy = O(\rho_n(R_n-r+r_0)) = O(e^{-\al r}).
\end{align*}
Which concludes the proof.
\end{proof}

\textbf{Acknowledgements.} The author wishes to thank Pierre Calka for suggesting the topic and providing inspiring support. Many ideas presented in this paper would not have been developed as thoroughly without his suggestions and encouragement. The author also thanks
the \emph{Laboratoire de Math\'ematiques Raphaël Salem} (\emph{Universit\'e de Rouen, France}) for its hospitality during the early stages of the project. The topic fits into the objectives of the French grant \emph{GrHyDy (Dynamic Hyperbolic Graphs) ANR-20-CE40-0002}.

\bibliographystyle{amsplain}
\bibliography{biblio.bib}

\providecommand{\bysame}{\leavevmode\hbox to3em{\hrulefill}\thinspace}
\providecommand{\MR}{\relax\ifhmode\unskip\space\fi MR }
% \MRhref is called by the amsart/book/proc definition of \MR.
\providecommand{\MRhref}[2]{%
  \href{http://www.ams.org/mathscinet-getitem?mr=#1}{#2}
}
\providecommand{\href}[2]{#2}
\begin{thebibliography}{10}

\bibitem{AbdBodFou2017}
Mohammed~Amin Abdullah, Michel Bode, and Nikolaos Fountoulakis, \emph{Typical
  distances in a geometric model for complex networks}, Internet Math. (2017),
  38. \MR{3708706}

\bibitem{AlbBar2002}
R\'{e}ka Albert and Albert-L\'{a}szl\'{o} Barab\'{a}si, \emph{Statistical
  mechanics of complex networks}, Rev. Modern Phys. \textbf{74} (2002), no.~1,
  47--97. \MR{1895096}

\bibitem{BhaSch2022}
Chinmoy Bhattacharjee and Matthias Schulte, \emph{Large degrees in scale-free
  inhomogeneous random graphs}, Ann. Appl. Probab. \textbf{32} (2022), no.~1,
  696--720. \MR{4386540}

\bibitem{BodFouMul2015}
Michel Bode, Nikolaos Fountoulakis, and Tobias M\"{u}ller, \emph{On the largest
  component of a hyperbolic model of complex networks}, Electron. J. Combin.
  \textbf{22} (2015), no.~3, Paper 3.24, 46. \MR{3386525}

\bibitem{BodFouMul2016}
Michel Bode, Nikolaos Fountoulakis, and Tobias Müller, \emph{The probability
  of connectivity in a hyperbolic model of complex networks}, Random Structures
  and Algorithms \textbf{49} (2016), no.~1, 65--94. \MR{3521274}

\bibitem{BogPapKri2010}
Marián Boguñá, Fragkiskos Papadopoulos, and Dmitri Krioukov,
  \emph{Sustaining the internet with hyperbolic mapping}, Nature communications
  \textbf{1} (2010), 62.

\bibitem{Bre2020}
Pierre Br\'emaud, \emph{Point process calculus in time and space---an
  introduction with applications}, Probability Theory and Stochastic Modelling,
  vol.~98, Springer, Cham, 2020. \MR{4212534}

\bibitem{BriKeuLen2017}
Karl Bringmann, Ralph Keusch, and Johannes Lengler, \emph{Sampling geometric
  inhomogeneous random graphs in linear time}, 25th {E}uropean {S}ymposium on
  {A}lgorithms, LIPIcs. Leibniz Int. Proc. Inform., vol.~87, Schloss Dagstuhl.
  Leibniz-Zent. Inform., Wadern, 2017, pp.~Art. No. 20, 15. \MR{3710917}

\bibitem{BriKeuLen2024}
\bysame, \emph{Average distance in a general class of scale-free networks},
  Advances in Applied Probability (2024), 1–36.

\bibitem{CanFou2016}
Elisabetta Candellero and Nikolaos Fountoulakis, \emph{Clustering and the
  hyperbolic geometry of complex networks}, Internet Math. \textbf{12} (2016),
  no.~1-2, 2--53. \MR{3474052}

\bibitem{ChuLu2006}
Fan Chung and Linyuan Lu, \emph{Complex graphs and networks}, CBMS Regional
  Conference Series in Mathematics, vol. 107, Conference Board of the
  Mathematical Sciences, Washington, DC; by the American Mathematical Society,
  Providence, RI, 2006. \MR{2248695}

\bibitem{DevLu1995}
Luc Devroye and Jiang Lu, \emph{The strong convergence of maximal degrees in
  uniform random recursive trees and dags}, Random Structures and Algorithms
  \textbf{7} (1995), no.~1, 1--14. \MR{1346281}

\bibitem{FouHooMulSche2021}
Nikolaos Fountoulakis, Pim van~der Hoorn, Tobias M\"{u}ller, and Markus
  Schepers, \emph{Clustering in a hyperbolic model of complex networks},
  Electron. J. Probab. \textbf{26} (2021), Paper No. 13, 132. \MR{4216526}

\bibitem{GugPanPet2012}
Luca Gugelmann, Konstantinos Panagiotou, and Ueli Peter, \emph{Random
  hyperbolic graphs: Degree sequence and clustering}, Automata, Languages, and
  Programming (Berlin, Heidelberg) (Artur Czumaj, Kurt Mehlhorn, Andrew Pitts,
  and Roger Wattenhofer, eds.), Springer Berlin Heidelberg, 2012, pp.~573--585.

\bibitem{Kal2017}
Olav Kallenberg, \emph{Random measures, theory and applications}, Probability
  Theory and Stochastic Modelling, vol.~77, Springer, Cham, 2017. \MR{3642325}

\bibitem{Kal2021}
\bysame, \emph{Foundations of modern probability}, third ed., Probability
  Theory and Stochastic Modelling, vol.~99, Springer, Cham, 2021. \MR{4226142}

\bibitem{KriPapKitVahBog2010}
Dmitri Krioukov, Fragkiskos Papadopoulos, Maksim Kitsak, Amin Vahdat, and
  Mari\'{a}n Bogu\~{n}\'{a}, \emph{Hyperbolic geometry of complex networks},
  Phys. Rev. \textbf{82} (2010), no.~3, 036106, 18. \MR{2787998}

\bibitem{LeaRoo1988}
M.~R. Leadbetter and Holger Rootz\'{e}n, \emph{Extremal theory for stochastic
  processes}, Ann. Probab. \textbf{16} (1988), no.~2, 431--478. \MR{929071}

\bibitem{Pet2014}
Ueli Peter, \emph{Random graph models for complex systems}, Ph.D. thesis, ETH
  Zürich, 2014.

\bibitem{Res1987}
Sidney~I. Resnick, \emph{Extreme values, regular variation, and point
  processes}, Applied Probability. A Series of the Applied Probability Trust,
  vol.~4, Springer-Verlag, New York, 1987. \MR{900810}

\bibitem{Sti1992}
John Stillwell, \emph{Geometry of surfaces}, Universitext, Springer-Verlag, New
  York, 1992. \MR{1171453}

\end{thebibliography}

\end{document}